\def\cod{\mbox{\rm codim }}
\def\C{\mathbb C}
\def\R{\mathbb R}
\def\Ker{\mbox{Ker\hspace{0.04cm}}}
\def\Im{\mbox{Im\hspace{0.04cm}}}
\newtheorem{teo}{\sc Theorem}[section]
\newtheorem{prop}[teo]{\sc Proposition}
\newtheorem{lem}[teo]{\sc Lemma}
\newtheorem{cor}[teo]{\sc Corollary}
\newtheorem{dfn}[teo]{\normalfont \em Definition}
\newtheorem{ex}[teo]{\normalfont \em Example}
\newtheorem{remark}[teo]{\normalfont \em Remark}
\begin{document}

\title{Special Jordan Subspaces in Coupled Cell Networks}

\author[C.S. Moreira]{C\'elia Sofia Moreira}

\address{CMUP - Centre of Mathematics, University of Porto -  Rua do Campo Alegre 687, 4169-007 Porto}

\email{celiasofiamoreira@gmail.com}

\maketitle

\begin{center}{\today}\end{center}

\begin{abstract} Given a regular network (in which all cells have the same type and receive the same number of inputs and all arrows have the same type), we define the {\em special Jordan subspaces} to that network and we use these subspaces to study the synchrony phenomenon in the theory of coupled cell networks. To be more precise, we prove that the synchrony subspaces of a regular network are precisely the polydiagonals that are direct sums of special Jordan subspaces. We also show that special Jordan subspaces play a special role in the lattice structure of all synchrony subspace because every join-irreducible element of the lattice is the smallest synchrony subspace containing some special Jordan subspace.
\end{abstract}

\vspace{0.2cm}
{\small  {\em Keywords: } Coupled cell networks, Jordan subspaces, synchrony, lattices.}

\section{Introduction}

\subsection{Regular coupled cell networks} In this subsection we briefly recall a few facts concerning the theory of (regular) coupled cell networks  developed by Stewart, Golubitsky and coworkers \cite{SGP03,GST05}.

A {\em cell} is a system of ordinary differential equations and a {\em coupled cell system} is a finite collection of interacting cells. A coupled cell system can be associated with a {\em network}, a directed graph whose nodes represent cells and whose arrows represent couplings between cells. The general theory allows for {\em loops} and {\em multiple arrows}. All couplings of the same type between two cells are represented by a single arrow with the number of couplings attached to it, unless this number is equal to 1, in which case it is simply omitted. The general theory associates to each network a class of admissible vector fields, consistent with the structure of the network.

In this paper we restrict attention to {\em regular networks}, that is, networks associated with coupled cell systems where all cells have the same differential equation (up to reordering of coordinates) and one kind of coupling. In this case, the state spaces of the cells are all identical, say an euclidean space $\R^k$, with $k\ge 1$, and so, if the network has $n$ cells then the {\em total phase space} is $(\R^k)^n$. The {\em valency} of a regular network is the number of arrows that input to each cell. 

A {\em polydiagonal} is a subspace of the total phase space that is defined by the equalities of certain cell coordinates. The total phase space is polydiagonal and the {\em fully synchrony subspace} is the polydiagonal defined by the equalities of all coordinates. A {\em synchrony subspace} is a polydiagonal that is flow invariant for every admissible vector field. 
Every regular network has at least two synchrony subspaces: the fully synchrony subspace and the total phase space.  In this work, these are called the {\em trivial} synchrony subspaces.

Golubitsky {\em et al.}~ \cite{GST05} proved that every coupled cell system associated with a network when restricted to a synchrony subspace corresponds to a coupled cell system associated with a smaller network, called the {\em quotient network}. 

The {\em adjacency matrix} of a regular network is a square matrix $A=[a_{ij}]$, where $a_{ij}$ is the number of arrows that cell $i$ receives from cell $j$. Note that each row sum of the adjacency matrix equals the valency of the network.

Since every adjacency matrix is a multiple of a stochastic matrix, well-known results about stochastic matrices can be applied to adjacency matrices and guarantee that:
\begin{enumerate}
\item the valency $v$ is a semi-simple eigenvalue (its algebraic and geometric multiplicities coincide) of the adjacency matrix and, for every eigenvalue $\lambda$ of this matrix, $|\lambda|\le v$.
\item $(1,\cdots,1)$ is an eigenvector associated with the valency.
\end{enumerate}

\subsection{Synchrony}

In the theory of coupled cell networks the concept of {\em synchrony} has always played a special role.
In fact, synchrony is an important phenomenon for networks in general, and in the last years it has been subject of several research studies in many different areas, such as Internet, spread of epidemics, food webs in ecosystems, neural circuits, gene transcription and cellular signaling \cite{GS06}.

Stewart, Golubitsky and coworkers \cite{GST05,SGP03} proved that it is possible to identify synchrony patterns in a network using solely the network architecture. It is really amazing and surprising that we can identify synchrony in a network just by analyzing its structure, without knowing anything about the internal dynamics of each cell and the specific equations of the admissible coupled cell systems. Moreover, Stewart proved that the set of all synchrony subspaces of a regular network is a complete lattice \cite{S09}. 

In this sense, it is important to study possible structures and properties of the set of all synchrony patterns in regular networks. We are lead to important questions concerning synchrony in networks, such as: why does a network has a small or great number of synchrony subspaces? Where does the synchrony come from? What are the important features of the corresponding lattice of synchrony subspaces?

\subsection{Synchrony and Jordan subspaces}

Theorem 4.3 of Golubitsky {\em et al.} \cite{GST05} is a powerful tool in the study of synchrony subspaces because it allows to relate synchrony subspaces of regular networks to invariant subspaces of the corresponding adjacency matrix $A$, in the following way: a subspace of the total phase space is a synchrony subspace if and only if it is polydiagonal and $A$-invariant, assuming that each cell phase space is $\R$.

Moreover, in Corollary 2.10 and Remark 5.3 of Aguiar and Dias~\cite{AD12} it is carefully explained that in the calculation of synchrony subspaces each cell phase space can be extended from $\R$ to $\C$. Basically and briefly (for more details see \cite{AD12}), this is due to the fact that a vector $v=(v_1,\cdots,v_n)\in \C^n$ satisfies an equality of coordinates $x_i=x_j$ if and only if $$\hspace*{1.5cm}\mbox{Re} (v_i)=\mbox{Re}(v_j)\, \mbox{ and }\,  \mbox{Im}(v_i)=\mbox{Im}(v_j), \quad 1\le i<j\le n.$$

On another hand, given a linear transformation $A$ from $\C^n$ into $\C^n$, it is well-known that a subspace is $A$-invariant if and only if it is a direct sum of Jordan subspaces \cite{LT}. Thus, we obtain the following result that will play a special role along all this work and that can be interpreted as a corollary of Theorem 4.3 of Golubitsky {\em et al.} \cite{GST05}:

\begin{lem}\: \label{known}Given an $n$-cell regular network, a subspace of the total phase space is a synchrony subspace if and only if it is polydiagonal and a direct sum of Jordan subspaces of the corresponding adjacency matrix, assuming that each cell phase space is $\C$.
\end{lem}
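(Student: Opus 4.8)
The plan is to obtain the statement as a direct combination of the three ingredients recalled immediately before it, with care taken only over the passage from $\R$ to $\C$. The backbone is Theorem 4.3 of \cite{GST05}, which characterizes a synchrony subspace as one that is simultaneously polydiagonal and $A$-invariant when each cell phase space is $\R$. On top of this I would invoke the algebraic fact from \cite{LT} that, for a linear operator on an algebraically closed field, a subspace is $A$-invariant if and only if it decomposes as a direct sum of Jordan subspaces of $A$. Substituting this description of $A$-invariance into the characterization of synchrony would then yield the desired equivalence, provided everything is read over $\C$.

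The only genuine point requiring justification is therefore the replacement of $\R$ by $\C$, and here I would lean on Corollary 2.10 and Remark 5.3 of \cite{AD12}. First I would observe that the polydiagonal condition is insensitive to the scalar field: by the displayed equivalence quoted above, a complex vector $v=(v_1,\ldots,v_n)$ lies in the polydiagonal cut out by a set of coordinate equalities $x_i=x_j$ precisely when both its real and imaginary parts do, so the polydiagonals over $\C$ are the complexifications of the polydiagonals over $\R$ and encode the same partitions of the cells. Next I would check that $A$-invariance transfers in the same way: since $A$ has real entries, a real subspace $W$ is $A$-invariant if and only if its complexification $W\otimes_{\R}\C$ is invariant under $A$ acting on $\C^n$, and conversely a polydiagonal $A$-invariant complex subspace is the complexification of the real polydiagonal it determines. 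Taken together, these two observations are exactly the assertion, borrowed from \cite{AD12}, that the computation of synchrony subspaces is unchanged by extending each cell phase space from $\R$ to $\C$.

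With the field issue settled, the proof reduces to a single substitution and I do not anticipate a serious obstacle. The step most deserving of care---and the one I would spell out---is verifying that the two translations above are compatible, i.e.\ that a polydiagonal $A$-invariant subspace of $\C^n$ genuinely corresponds to a synchrony subspace of the original real system rather than to a spurious complex object. Once that bookkeeping is in place, applying the Jordan-subspace description of $A$-invariance from \cite{LT} over $\C$ completes the argument and gives the stated equivalence.
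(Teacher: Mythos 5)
Your proposal is correct and follows essentially the same route as the paper, which presents this lemma as an immediate consequence of Theorem 4.3 of \cite{GST05}, the $\R$-to-$\C$ extension justified by Corollary 2.10 and Remark 5.3 of \cite{AD12}, and the characterization of $A$-invariant subspaces as direct sums of Jordan subspaces from \cite{LT}. Your additional care over the compatibility of the complexification with both the polydiagonal and invariance conditions is exactly the point the paper delegates to \cite{AD12}, so nothing is missing.
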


In particular, if the adjacency matrix of a regular network has only simple eigenvalues then all synchrony subspaces are polydiagonal and direct sums of eigenspaces. For example, the adjacency matrix of the regular network in Figure \ref{FigSE} has only simple eigenvalues and the corresponding eigenspaces are:
$$\begin{array}{l} G_2=\{x_1=x_2=x_3=x_4\},\\G_0= \{x_1=x_4, x_2=x_3, x_1+x_2=0\},\\
G_{1}=\{x_1=x_3,  x_1+x_2=0, x_4=0\},\\G_{-1}= \{x_1=x_2=x_3, 2x_1+x_4=0\}.
\end{array}$$

In this case, there are only four nontrivial synchrony subspaces:
$$\begin{array}{ll}
1. \; G_2 \oplus G_0   \,=\, \{x_1=x_4,\, x_2=x_3\} , & 3.\;   G_2 \oplus G_1\oplus G_{-1} \,=\,  \{x_1=x_3\},\\
2. \; G_2 \oplus G_{-1}  \,=\, \{x_1= x_2=x_3\}, & 4. \; G_2 \oplus  G_0 \oplus G_{-1} = \{x_2=x_3\}. \end{array}$$

 \begin{figure}[!hb]
 
\begin{minipage}[b]{0.4\linewidth}\centering
        \includegraphics[scale=0.37]{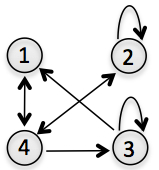} \vspace{-0.8cm}
 \end{minipage}  \begin{minipage}[b]{0.4\linewidth} \centering
$$\left(\begin{array}{cccc}
0 & 0 & 1 & 1  \\
0 & 1 & 0 & 1  \\
0 & 0 & 1 & 1  \\
1 & 1 & 0 & 0  \end{array}\right)$$
 \end{minipage}
 
 \caption{Regular network (left) whose adjacency matrix (right) has only simple eigenvalues.} \label{FigSE}
 \end{figure}

This case of simple eigenvalues is special because, for a regular network, all synchrony subspaces can be obtained simply by computing all possible sums (finite and direct) of such eigenspaces. We can use this fact to make some important observations about the lattices of these networks. For example, when $n\ge 4$, we can guarantee the existence of some possible lattice structures such as pentagons. H. Kamei \cite{K09} proved that when $n=4$ there is only one possible pentagon structure, which has two vertices as 2-dimensional synchrony subspaces and one vertex as a 3-dimensional synchrony subspace. It is clear that when $n$ increases, the number of possible pentagon structures also increases and, for example, for $n=5$ there are four possible pentagon structures which are shown in Figure \ref{Pentagon}.

\begin{figure}[!htb] \includegraphics[scale=0.47]{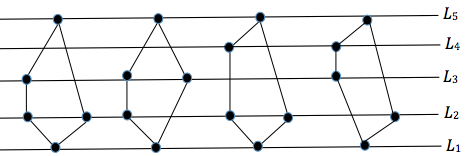}\caption{All possible pentagon lattices for 5-cell regular networks whose adjacency matrices have only simple eigenvalues. Level $L_i$ contains $i$-dimensional synchrony subspaces, with $1\le i\le 5$.}\label{Pentagon}\end{figure}

\subsection{Motivation}

\subsubsection{Motivating References}

The above relationship between synchrony and Jordan subspaces obtained from Theorem 4.3 of Golubitsky {\em et al.} \cite{GST05} was detailed studied by H.~Kamei for the special case of simple eigenvalues in \cite{K09} and this work was the main motivation of our paper. More recently, Aguiar and Dias \cite{AD12} extended this work to general networks and provided an algorithm to construct the lattice of all synchrony subspaces through a small set of synchrony subspaces. They also proved that the problem of obtaining the lattice of synchrony subspaces of a general network can be reduced to the problem of obtaining the lattice of synchrony subspaces of regular networks.

\subsubsection{Special Jordan subspaces}
Lemma \ref{known} guarantees that all synchrony subspaces of a regular network can be written as a direct sum of Jordan subspaces of the corresponding adjacency matrix. Since synchrony subspaces are defined by equalities of coordinates, the knowledge of all synchrony subspaces can be obtained from the list of all Jordan subspaces satisfying at least one equality of coordinates. Nevertheless, some observations in concrete examples made us believe that some Jordan subspaces were essential in this list and some others were not. Then, we conjectured the existence of a class of Jordan subspaces sufficient to generate all synchrony subspaces by direct sums. For example, the regular network in Figure \ref{Fig+-i} has only five nontrivial synchrony subspaces, namely, $$\begin{array}{l} S_1= \{x_1=x_2=x_3, x_4=x_5\}, \\S_2= \{x_1=x_4=x_5,  x_2=x_3\},\\
S_3= \{x_2=x_3=x_4=x_5\},\\S_4= \{x_2=x_3, \, x_4=x_5\}, \\ S_5=\{x_1=x_4\}.\end{array}$$

Its adjacency matrix has four distinct eigenvalues, namely, 2, $-1$ and $\pm i$, and the corresponding eigenspaces are:
$$\begin{array}{l}G_2= \{x_1=x_2=x_3=x_4=x_5\},\\ G_{-1}= \{x_2=x_3, \, x_4=x_5, x_1+x_2+x_4=0\},\\
G_{\pm i}=\, \{x_1=x_4,  2x_1+2x_2+x_3=0, \, \pm (2 i) x_1+x_3=0, 3x_1+x_2+x_5=0\}.\end{array}$$

\begin{figure}[!hb]
\begin{minipage}[b]{0.37\linewidth}\centering
        \includegraphics[scale=0.37]{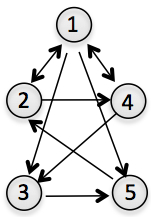} \vspace{-1.1cm}
 \end{minipage}  \begin{minipage}[b]{0.4\linewidth} \centering
$$\left(\begin{array}{ccccc}
0 & 1 & 0 & 1 & 0 \\
1 & 0 & 0 & 0 & 1  \\
1 & 0 & 0 & 1 & 0 \\
1 & 1 & 0 & 0 & 0\\
1 & 0 & 1 & 0 & 0 \end{array}\right)$$ 
 \end{minipage}
 
 \caption{Regular network and the corresponding adjacency matrix.} \label{Fig+-i}
 \end{figure}

Observing that 
$$\begin{array}{l}
S_i=G_2\oplus W_i,\; \mbox{for } 1\le i \le 3,\\
S_4= G_2 \oplus W_1\oplus  W_2,\\
S_5= G_2 \oplus W_2 \oplus G_i \oplus G_{-i},\end{array}$$
where $W_1$, $W_2$ and $W_3$ are the following Jordan subspaces in $G_{-1}$, $$\begin{array}{l}
W_1=\{x_1=x_2=x_3, \, x_4=x_5, \, 2x_1+x_4=0\},\\
W_2= \{x_1=x_4=x_5, \, x_2=x_3, \, 2x_1+x_2=0\},\\
W_3 = \{x_2=x_3=x_4=x_5, \, x_1+2x_2=0\},\\
\end{array}$$
we easily understand that, among all Jordan subspaces in $G_{-1}$, the subspaces $W_1$, $W_2$ and $W_3$ are essential to list all nontrivial synchrony subspaces.

We prove that all synchrony subspaces of a general regular network can be obtained using the direct sum operation over a small set of Jordan subspaces --- the {\em special Jordan subspaces}. Recall that in Aguiar and Dias \cite{AD12} it is proved that  all synchrony subspaces of a general network can be obtained using the sum operation over a small set of synchrony subspaces --- the {\em sum-irreducible synchrony subspaces}. We emphasize that although our initial aim was to generalize the work of Kamei \cite{K09} to regular networks, our principal aim has become to evidence the close relationship between special Jordan subspaces and synchrony subspaces.

\subsection{Structure of the paper} In Section 2 we define {\em special subspaces} and we present all results about these subspaces that are extremely useful along the work. In Section 3 we introduce the key concept of this work, {\em special Jordan subspace} and we also prove Theorem \ref{FT}, which is the main result about special Jordan subspaces: given an $n$-cell regular network, there exists a direct sum decomposition of $\C^n$ into special Jordan subspaces. In Section 4 we prove Theorem \ref{FT2}, which is the main result of this work about synchrony and shows the special role played by special Jordan subspaces in the synchrony phenomenon: given a regular network, a subspace of the total phase space is a synchrony subspace if and only if it is polydiagonal and a direct sum of special Jordan subspaces. We also describe a method to list all synchrony subspaces. In Section 5 we study lattices consisting of all synchrony subspaces and we show that special Jordan subspaces play a special role in the structure of these lattices, being directly connected with the join-irreducible elements of such lattices, precisely, every join-irreducible element is the smallest synchrony subspace containing some special Jordan subspace.

\subsection*{Acknowledgments}

The author thanks Ana Paula Dias for all helpful comments and valuable suggestions. Research supported by the FCT (Funda\c c\~ao para a Ci\^encia e a Tecnologia) Grant SFRH/BPD/64844/2009 and partially funded by the European Regional Development Fund through the programme COMPETE and by the Portuguese Government through the FCT under the projects PEst-C/MAT/UI0144/2011 and PTDC/MAT/100055/2008.

\section{Special subspaces}

In this section we define special subspaces and we present important properties of these subspaces that will be used along this work.

\subsection{Definition}

Given a subspace $W$ of $\C^n$, we denote by $P(W)$ the smallest polydiagonal containing $W$. Notice the following basic properties of this operator, for all subspaces $W$ and $V$ of $\C^n$: 
\begin{enumerate}\item $W\subset P(W)$,\item $P(W\cap V)\subset P(W)\cap P(V)$, \item $W\subset V \, \Rightarrow \, P(W)\subset P(V)$,  \item $ W \mbox{ is polydiagonal } \Leftrightarrow P(W)=W$.\end{enumerate}

\begin{dfn} \normalfont Consider a subspace $E$ of $\C^n$. A subspace $W$ of $E$ is {\em special in} $E$ when, for every subspace $U$ of $E$,
$$[\dim U=\dim W , \;  P(U)\subset P(W)]\; \Rightarrow \; P(U)=P(W).$$
\end{dfn}

Thus, $W$ is special in $E$ when every subspace $U$ of $E$ having the same dimension and satisfying the same equalities of coordinates as $W$, does not satisfy any additional equality of coordinates.

\begin{ex} \label{ExSS}\normalfont Consider the following subspaces of $\C^5$:
$$\begin{array}{l} E=\{3x_1+4x_2-x_3+3x_5=0,\, x_2-x_3-x_4+x_5=0\},\\
 U=\{x_2=x_3,\, x_4=x_5=0, x_1+x_2=0\}, \\
 W=\{x_2=x_3=x_4=x_5, x_1+2x_2=0\}.\end{array}$$
Notice that $U$ and $W$ are both 1-dimensional subspaces of $E$. It is easy to see that $U$ is not special in $E$ because $P(W)\subsetneq P(U)$. It is also easy to understand that $W$ is special in $E$ because it satisfies three independent equalities of coordinates and there is only one 1-dimensional subspace satisfying four independent equalities of coordinates, namely, the fully synchrony subspace, which clearly is not contained in $E$.\end{ex}

\subsection{Results} In this subsection we present three important results about special subspaces.

\begin{prop} \label{W=}Consider a $k$-dimensional subspace $E$ of $\C^n$, with $k>1$. A nonzero subspace $W$ of $E$ is special in $E$ if and only if $W=E\cap P(W)$.\end{prop}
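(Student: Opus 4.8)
The plan is to prove both implications, with essentially all the content lying in the converse direction. Throughout I will use that $W\subset E\cap P(W)$ always holds (since $W\subset E$ and $W\subset P(W)$), together with the monotonicity property (3) of $P$ and the idempotency $P(P(W))=P(W)$ coming from property (4).

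For the easy implication, I assume $W=E\cap P(W)$ and take any subspace $U\subset E$ with $\dim U=\dim W$ and $P(U)\subset P(W)$. Then $U\subset P(U)\subset P(W)$ and $U\subset E$, so $U\subset E\cap P(W)=W$; equality of dimensions forces $U=W$, hence $P(U)=P(W)$. This shows $W$ is special in $E$.

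For the converse I assume $W$ is special and argue by contradiction, supposing $W\subsetneq V:=E\cap P(W)$; write $d=\dim W$ and $p=\dim P(W)$. The aim is to manufacture a subspace $U\subset E$ with $\dim U=d$ but $P(U)\subsetneq P(W)$, contradicting specialness. First I record that $P(V)=P(W)$: indeed $W\subset V$ gives $P(W)\subset P(V)$, while $V\subset P(W)$ gives $P(V)\subset P(W)$. Next I check that $p\ge 2$: if $P(W)$ were the fully synchrony subspace ($p=1$), then $W\subset P(W)$ with $\dim W=d\ge 1$ would force $W=P(W)$, whence $V=E\cap P(W)=E\cap W=W$, contradicting $W\subsetneq V$. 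Since $p\ge 2$, I may choose a polydiagonal $Q\subsetneq P(W)$ of dimension $p-1$, obtained from the partition defining $P(W)$ by merging two of its blocks.

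The crux is then an elementary dimension count carried out inside the ambient space $P(W)$. Since $W\subsetneq V$ gives $\dim V\ge d+1$, and both $V,Q\subset P(W)$, the intersection satisfies $\dim(V\cap Q)\ge \dim V+\dim Q-p\ge (d+1)+(p-1)-p=d$. Choosing any $d$-dimensional subspace $U\subset V\cap Q$ yields $U\subset E$ with $\dim U=d=\dim W$, while $U\subset Q$ forces $P(U)\subset Q\subsetneq P(W)$ because $Q$ is a polydiagonal containing $U$. Thus $P(U)\subset P(W)$ with $P(U)\ne P(W)$, contradicting that $W$ is special. I expect the main obstacle to be precisely the choice of object to slice: working with $V$ directly fails, since $P(V)=P(W)$ records that $V$ satisfies exactly the same coordinate equalities as $W$. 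The decisive move is to intersect $V$ with a codimension-one polydiagonal face $Q$ of $P(W)$, the dimension estimate ensuring the slice is still large enough to house a $d$-dimensional subspace while automatically gaining one extra coordinate equality.
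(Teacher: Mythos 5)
Your proof is correct and follows essentially the same strategy as the paper: the forward implication is identical, and the converse is the same contradiction argument, producing a subspace of $E$ of dimension $\dim W$ inside $E\cap P(W)$ whose smallest polydiagonal is strictly smaller by slicing with a codimension-one polydiagonal. The only cosmetic difference is that the paper first picks a $(\dim W+1)$-dimensional subspace $\tilde W$ of $E\cap P(W)$ and cuts it with a codimension-one polydiagonal of $\C^n$ not containing $\tilde W$, whereas you cut $E\cap P(W)$ with a codimension-one polydiagonal face $Q$ of $P(W)$ and then pass to a $\dim W$-dimensional subspace of the slice.
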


\begin{proof} The result is trivial when $W=E$ and so, henceforward we suppose that $W$ is a nonzero proper subspace of $E$. We assume firstly that $W=E\cap P(W)$ and we prove that $W$ is special in $E$. For every subspace $U$ of $E$ satisfying $$\dim U=\dim W,  \; P(U)\subset P(W),$$ we have $$ U\subset E\cap P(U)\subset E\cap P(W)=W.$$ So, $U=W$ and $P(U)=P(W)$, concluding that $W$ is special in $E$.

Reciprocally, we assume that $W$ is special in $E$ and we prove that $W=E\cap P(W)$. 
By contradiction, suppose that $W\ne E\cap P(W)$. Then, there is a $(k+1)$-dimensional subspace $\tilde W$ of $E\cap P(W)$. Because $\dim \tilde W>1$, there is at least a codimension-1 polydiagonal $X$ that does not contain $\tilde W$. So, $$\tilde W\cap X\subsetneq \tilde W\quad \mbox{ and }\quad  
\cod (\tilde W\cap X)= \cod \tilde W +1.$$
Notice that:\begin{enumerate}
\item $\dim  (\tilde W \cap X)  = n-(\cod \tilde W +1) = \dim \tilde W-1 = \dim W.$
\item $P(\tilde W\cap X)\subset P(\tilde W)\cap X\subsetneq P(\tilde W)\subset P(W)$ (recall that $\tilde W\subset P(W)).$\end{enumerate}
However, these two conditions contradict the fact that $W$ is special in $E$. Therefore, such a subspace $\tilde W$ does not exist and so, $W=E\cap P(W)$.\end{proof}

\begin{ex}\normalfont In Example \ref{ExSS}, $U$ is not special in $E$ because $U\neq E\cap P(U)$ and $W$ is special in $E$ because trivially $W=E\cap P(W)$.\end{ex}

Next we provide a characterization of special subspaces that will be very useful in the calculation of these subspaces. 

\begin{prop} \label{specialvector} Consider a $k$-dimensional subspace $E$ of $\C^n$, with $k>1$. A nonzero subspace $W$ of $E$ is special in $E$  if and only $W=E\cap X$, for some codimension-$\nu$ polydiagonal $X$, with $\nu=\dim E-\dim W$.\end{prop}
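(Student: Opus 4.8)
The plan is to derive both implications from Proposition \ref{W=}, which characterizes special subspaces as those $W$ with $W = E\cap P(W)$.

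For the easy direction, suppose $W = E\cap X$ for some polydiagonal $X$ of codimension $\nu$; in fact the value of the codimension plays no role here. Since $X$ is a polydiagonal containing $W = E\cap X$, minimality of $P(W)$ gives $P(W)\subseteq X$, whence $E\cap P(W)\subseteq E\cap X = W$. As the reverse inclusion $W\subseteq E\cap P(W)$ is automatic, we obtain $W = E\cap P(W)$, and Proposition \ref{W=} yields that $W$ is special in $E$.

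For the converse, assume $W$ is special in $E$. By Proposition \ref{W=} we have $W = E\cap P(W)$; write $d=\dim W$ and $\mu=\cod P(W)$. The identity $\dim(E+P(W))=\dim E+\dim P(W)-\dim(E\cap P(W))$ together with $d=\dim E-\nu$ gives $\cod(E+P(W))=\mu-\nu\ge 0$. Thus $P(W)$ already meets $E$ in $W$, but its codimension $\mu$ may strictly exceed $\nu$ (this genuinely occurs, as in Example \ref{ExSS}). The task — and the main obstacle — is to enlarge $P(W)$ to a polydiagonal $X$ of codimension exactly $\nu$ \emph{without} enlarging its intersection with $E$.

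I would build $X$ by iterated refinement, based on the following observation: for any polydiagonal $X\supseteq P(W)$ with $E+X\ne\C^n$, one can split some block of $X$ into two parts so that the resulting polydiagonal $X'$ — of codimension $\cod X-1$ and still containing $P(W)$ — satisfies $\dim(E+X')=\dim(E+X)+1$. Indeed, the block indicator vectors form a basis of $X$, and splitting a singleton $\{i\}$ off a block of size at least two adjoins the coordinate vector $e_i$ to the span. If no such split enlarged $E+X$, then every $e_i$ with $i$ in a nontrivial block would lie in $E+X$; since the $e_i$ indexed by singleton blocks already lie in $X$, all coordinate vectors would lie in $E+X$, forcing $E+X=\C^n$ and contradicting the hypothesis. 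Hence some split strictly enlarges $E+X$, and for the corresponding $X'$ the formula $\dim(E\cap X')=\dim E+\dim X'-\dim(E+X')$ keeps $\dim(E\cap X')=d$; as $W\subseteq P(W)\subseteq X'$ gives $W\subseteq E\cap X'$, we conclude $E\cap X'=W$.

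Starting from $X=P(W)$ and applying this step $\mu-\nu$ times, each step lowers both $\cod X$ and $\cod(E+X)$ by one. After $\mu-\nu$ steps we reach a polydiagonal $X$ with $\cod X=\nu$ and $E+X=\C^n$, whence $\dim(E\cap X)=d=\dim W$ and $W\subseteq X$ force $E\cap X=W$, as required. The only delicate point is that the refinement process must not stall before $E+X=\C^n$ is achieved, and this is exactly what the observation above guarantees.
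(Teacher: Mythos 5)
Your proof is correct, and while the forward implication coincides with the paper's (from $W=E\cap X$ one gets $P(W)\subset X$, hence $W=E\cap P(W)$, and Proposition \ref{W=} finishes), the converse is argued by a genuinely different construction. The paper's converse is a one-shot rank argument: the $s=\cod P(W)$ equalities defining $P(W)$, viewed as linear functionals restricted to $E$, have rank $r=\cod (E\cap P(W))-\cod E=\dim E-\dim W$, so one selects $r$ of them whose restrictions to $E$ are independent and takes $X$ to be the polydiagonal they define. You instead reach a suitable $X$ by iterated refinement of the partition underlying $P(W)$, splitting off singletons and tracking $\dim (E+X)$; your observation that some singleton split must strictly enlarge $E+X$ (otherwise every coordinate vector would lie in $E+X$, forcing $E+X=\C^n$) is exactly what keeps the process from stalling, and the identity $\dim (E\cap X')=\dim E+\dim X'-\dim (E+X')$ pins the intersection at $W$ throughout. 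The trade-off: the paper's argument is shorter but silently relies on the fact that any $r$-element subset of an independent defining set of coordinate equalities again defines a polydiagonal of codimension exactly $r$ (true because such a set is a forest on the index set, but left unstated), whereas your version makes the codimension bookkeeping airtight at every step at the cost of an induction of length $\cod P(W)-\nu$. Both routes are elementary and both hinge on Proposition \ref{W=}.
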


\begin{proof} The result is trivial when $W=E$ and so, henceforward we suppose that $W$ is a nonzero proper subspace of $E$. We assume firstly that $W=E\cap X$, for some polydiagonal $X$, and we prove that $W$ is special in $E$. Under this assumption, we have $P(W)\subset X$ and so,
$$W\subset E\cap P(W)\subset E\cap X=W.$$
Thus, $W=E\cap P(W)$ and, by Proposition \ref{W=}, $W$ is special in $E$.
    
Conversely, we assume that $W$ is special in $E$. Due to Proposition \ref{W=}, $W=E\cap P(W)$. The equalities of $P(W)$ in the subspace $E$ define an homogeneous linear system with $s$ equations, $s=\cod P(W)$. The rank of this homogeneous system equals $r$, where $r=\cod (E\cap P(W))-\cod E$, and thus, among all $s$ equalities of $P(W)$, we can choose just $r$ equalities to obtain $E\cap X$. Hence, there is a polydiagonal $X$ such that $E\cap X=E\cap P(W)=W$ and $$\cod X=r= \cod (E\cap P(W))-\cod E.$$ This implies that $\cod X=\dim E- \dim W$, concluding the proof.\end{proof}

\begin{ex}\normalfont \label{CalcSS} Consider the following 2-dimensional subspace of $\C^5$:
$$E=\{x_2=x_3,\, x_4=x_5, \, x_1+x_2+x_4=0\}.$$
All 1-dimensional special subspaces in $E$ are obtained intersecting $E$ with all 1-dimensional polydiagonals defined by a unique equality of coordinates independent from the equalities defining $E$. Hence, it suffices to consider three polydiagonals, for example, $\{x_1=x_2\}$, $\{x_1=x_4\}$ and $\{x_2=x_4\}$. Therefore, there are exactly three 1-dimensional special subspaces in $E$:
\begin{enumerate}
\item $  \, W_{1}= \{x_1=x_2=x_3, x_4=x_5,  2x_1+x_4=0\}$,
\item $ \, W_{2}=\{x_1=x_4=x_5, x_2=x_3,  2x_1+x_2=0\}$,
\item $ \, W_{3}=\{x_2=x_3= x_4=x_5, x_1+2x_2=0\}$.\end{enumerate}\end{ex}

The following result is crucial to prove Theorem \ref{FT}, which is the main result of this work about special Jordan subspaces.

\begin{prop} \label{DSDsubspaces} Consider a $k$-dimensional subspace $E$ of $\C^n$, with $k>1$. If $E$ does not contain the fully synchrony subspace then there is a direct sum decomposition of $E$ into 1-dimensional special subspaces in $E$.
\end{prop}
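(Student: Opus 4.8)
The plan is to argue by induction on $k=\dim E$, using throughout the characterization from Proposition \ref{W=}: a nonzero $W\subset E$ is special in $E$ if and only if $W=E\cap P(W)$. First note that ``$E$ does not contain the fully synchrony subspace $\Delta=\{x_1=\cdots=x_n\}$'' is equivalent to $E\cap\Delta=0$, since $\Delta$ is one-dimensional. The base case $k=1$ is immediate, as $E$ is then its own special line ($E=E\cap P(E)$ trivially). So I would assume the statement for dimension $k-1$ and treat a $k$-dimensional $E$ with $E\cap\Delta=0$.

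The first step is to produce a single $1$-dimensional special subspace inside $E$ by a minimality argument. For nonzero $v\in E$, the polydiagonal $P(\langle v\rangle)$ is cut out by exactly the equalities $x_i=x_j$ satisfied by $v$; since there are only finitely many polydiagonals, the family $\{P(\langle v\rangle):0\ne v\in E\}$ has a minimal element $Y=P(\langle v\rangle)$. I claim $\langle v\rangle=E\cap Y$, so that $\langle v\rangle$ is special in $E$. Indeed, if $E\cap Y$ contained a vector $u$ independent from $v$, then $u$ would have coordinates $u_i\ne u_j$ with $x_i=x_j$ not among the equalities of $Y$ (otherwise $u$ satisfies all coordinate equalities, forcing $u\in E\cap\Delta=0$); choosing $t=(v_j-v_i)/(u_i-u_j)$ makes $w=v+tu$ a nonzero vector of $E\cap Y$ that also satisfies $x_i=x_j$, whence $P(\langle w\rangle)\subsetneq Y$, contradicting minimality. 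Thus $E\cap Y=\langle v\rangle$, a special line.

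Next I would peel off this line and recurse. Since $0\ne v\in E$ and $E\cap\Delta=0$, we have $v\notin\Delta$, so there is a pair $(a,b)$ with $v_a\ne v_b$; set $H=\{x_a=x_b\}$. Then $E\not\subset H$ (else $v\in H$) and $\langle v\rangle\not\subset H$, so $F:=E\cap H$ has dimension $k-1$ and $E=\langle v\rangle\oplus F$. As $F\subset E$ we still have $F\cap\Delta=0$, so the induction hypothesis yields $F=W_2\oplus\cdots\oplus W_k$ with each $W_i$ special in $F$.

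The one genuine subtlety, and the step I expect to be the crux, is that ``special'' is relative to the ambient space, so I must upgrade each $W_i$ from special in $F$ to special in $E$. Here the choice $F=E\cap H$ pays off: since $W_i\subset F\subset H$ and $H$ is a polydiagonal, minimality of $P(W_i)$ gives $P(W_i)\subset H$, hence $E\cap P(W_i)\subset E\cap H=F$, and therefore $E\cap P(W_i)\subset F\cap P(W_i)=W_i$ (the last equality because $W_i$ is special in $F$); the reverse inclusion being trivial, $E\cap P(W_i)=W_i$, so $W_i$ is special in $E$ by Proposition \ref{W=}. Setting $W_1:=\langle v\rangle$ then gives $E=W_1\oplus W_2\oplus\cdots\oplus W_k$ as a direct sum of $1$-dimensional special subspaces in $E$, completing the induction.
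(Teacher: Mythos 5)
Your argument is correct, but it takes a genuinely different route from the paper's. The paper's proof is not inductive: from $E\cap F=\{0\}$ (with $F$ the fully synchrony subspace) a rank computation extracts $k$ independent coordinate equalities cutting out a codimension-$k$ polydiagonal $X$ with $E\cap X=\{0\}$; dropping one equality at a time gives codimension-$(k-1)$ polydiagonals $X_1,\dots,X_k$, each $E\cap X_i$ is a special line by Proposition \ref{specialvector}, and $E=(E\cap X_1)\oplus\cdots\oplus(E\cap X_k)$, so all $k$ special lines are produced simultaneously from one transversal polydiagonal. You instead induct on $k$: you produce a single special line by choosing $P(\langle v\rangle)$ minimal among the finitely many polydiagonals of the form $P(\langle u\rangle)$, $0\neq u\in E$ --- a clean extremal argument that in effect re-derives the content of Proposition \ref{specialvector} without invoking it --- then peel it off along a coordinate hyperplane $H=\{x_a=x_b\}$ missed by $v$ and recurse on $E\cap H$. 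The genuine added value of your version is the upgrade step: the observation that a special subspace of $E\cap H$ is automatically special in $E$ whenever $H$ is a polydiagonal is a reusable transitivity-type fact that the paper never isolates, and it is exactly what closes the induction. Two cosmetic points: your base case $k=1$, and your appeal to Proposition \ref{W=} for $F=E\cap H$ when $\dim F=1$, sit formally outside that proposition's hypothesis $k>1$, but the needed identity $W=F\cap P(W)$ is trivial there since $W=F$, so nothing is lost. Overall the paper's proof is shorter and more explicit about which special lines appear; yours is self-contained modulo Proposition \ref{W=} and yields the restriction lemma as a by-product.
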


\begin{proof} The $n-1$ equalities of the fully synchrony subspace $F$ in the subspace $E$ define an homogeneous linear system with $n-1$ equations. The rank of this homogeneous system equals $r$, where $r=\cod (E\cap F)-\cod E=n-\cod E=\dim E=k$. Thus, we can choose $k$ equalities of coordinates to obtain $E\cap F=\{0\}$. So, there is a polydiagonal $X$ satisfying $$E\cap X=E\cap F=\{0\}, \quad \cod X=k.$$

Let $X_1,\ldots,X_{k}$ be the $k$ possible distinct codimension-$(k-1)$ polydiagonals that contain $X$. Note that for every $1\le i\le k$, the intersection $E\cap X_i$ is a 1-dimensional special subspace in $E$ and that $$i\ne j\quad \Rightarrow \quad E \cap X_i\cap X_j=E\cap X=\{0\}.$$
Therefore, $E=(E\cap X_1) \oplus \cdots \oplus (E\cap X_{k})$ is a direct sum decomposition of $E$ into 1-dimensional special subspaces in $E$. \end{proof}

\section{Special Jordan Subspaces}


In this section we consider regular networks and we define special Jordan subspaces to these networks. 

\subsection{Jordan subspaces} \label{basic} We start providing basic concepts and results about Jordan subspaces that are extremely important in our work. For details about this subject see, for example, Section 6 of  Lancaster and Tismenetsky \cite{LT}.

Consider a linear transformation $A$ from $\C^n$ into $\C^n$ and an eigenvalue $\lambda$. For a positive integer $r$, the subspace $K_{\lambda}^r = \Ker (A-\lambda I)^r$ is the {\em generalized eigenspace of 
order} $r$ (to $\lambda$). Since $A$ has finite order, there is a positive integer $p$ such that $$ K_{\lambda}^1 \subset \cdots \subset  K_{\lambda}^p = K_{\lambda}^{p+1} = .... $$ For such a $p$, the generalized eigenspace  $K_{\lambda}^p$ is the {\em generalized eigenspace} to $\lambda$ and it is simply denoted by $G_\lambda$. An element $x$ in $K_{\lambda}^r \backslash \ K_{\lambda}^{r-1}$ is called a {\em generalized eigenvector} of order $r$ of $A$, considering $K_{\lambda}^0=\{ 0 \}$. An ordinary eigenvector is a generalized eigenvector of order 1 and an eigenspace is a generalized eigenspace of order 1.

A sequence of nonzero vectors $\{x_1,\ldots, x_k \}$ is called a {\em Jordan chain} of length $k$ associated with the eigenvalue $\lambda$ when $$(A-\lambda I) x_1 = 0, \, (A-\lambda I) x_2 =x_1,\cdots, (A-\lambda I)x_k=x_{k-1}.$$ Any Jordan chain consists of linearly independent vectors (\cite{LT}, p. 230) and so, if it has length $k$, it spans a $k$-dimensional subspace. A {\em Jordan subspace} is a subspace spanned by a Jordan chain. 

\subsection{Special Jordan subspaces} In this subsection we define {\em special Jordan subspaces} and we provide a method to obtain these subspaces. 

\begin{dfn} \normalfont \label{sjs}Consider a regular network and the corresponding adjacency matrix. A Jordan subspace $W$ of a generalized eigenspace $G$ is {\em special to the network} when, for every Jordan subspace $U$ of $G$, $$[\dim U=\dim W,  \;  P(U)\subset P(W)]\, \Rightarrow \, [ P(U)=P(W)  \, \vee\, U=F  ],$$ where $F$ is the fully synchrony subspace.\end{dfn}

Thus, $W$ is a special Jordan subspace to the network when every Jordan subspace $U$ of $G$ having the same dimension and satisfying the same equalities of coordinates as $W$, either does not satisfy any additional equality of coordinates or is the fully synchrony subspace. 

After the introduction of this definition it is necessary to take extra care with the word {\em special}. The terms {\em special subspace} and {\em special Jordan subspace} must not be confused. In fact, the first concept is defined for all subspaces of $\C^n$; the second concept is only defined for Jordan subspaces in the context of regular networks. In general, these concepts are distinct. Trivially:
\begin{enumerate}
\item for regular networks with diagonalizable adjacency matrices, all Jordan subspaces are 1-dimensional.
\item if $G$ is an eigenspace that is not associated with the valency of the network, 1-dimensional special Jordan subspaces to the network are precisely 1-dimensional special subspaces in $G$.
\item if $G$ is the eigenspace associated with the valency of the network (recall that the valency is always a semi-simple eigenvalue) then: \begin{enumerate}
\item $F$ is the unique 1-dimensional special subspace in $G$,
\item all special Jordan subspaces to the network in $G$ are 1-dimensional,
\item if $\dim G=2$ then all 1-dimensional subspaces are special Jordan subspaces.
\end{enumerate}
\item for generalized eigenspaces of order $k>1$, not all $l$-dimensional special subspaces are $l$-dimensional special Jordan subspaces, with $1\le l\le k$.
\end{enumerate}

\subsection{Special Jordan subspaces in eigenspaces that are not associated with the valency} As said above, if $G$ is an eigenspace (generalized eigenspace of order 1) that is not associated with the valency of the network then a subspace $J$ of $G$ is a 1-dimensional special subspace in $G$ if and only if it is a special Jordan subspace to the network. So, the method to list all special Jordan subspaces in $G$ consists solely in calculating all 1-dimensional special subspaces in $G$.

\begin{ex} \normalfont \label{Ex+-i}Consider again the 5-cell regular network in Figure \ref{Fig+-i} and the eigenspaces of the corresponding adjacency matrix:
$$\begin{array}{l}G_2= \{x_1=x_2=x_3=x_4=x_5\}, \\ G_{-1}= \{x_2=x_3, \, x_4=x_5, x_1+x_2+x_4=0\},\\
G_{\pm i}= \{x_1=x_4,  2x_1+2x_2+x_3=0, \, \pm (2 i) x_1+x_3=0, 3x_1+x_2+x_5=0\}.\end{array}$$

$G_2$ and $G_{\pm i}$ are 1-dimensional and, therefore, they are special Jordan subspaces to the network. For the eigenspace $G_{-1}$, all special Jordan subspaces  in this eigenspace are precisely all 1-dimensional special subspaces in $G_{-1}$, which were already obtained in Example \ref{CalcSS}:
\begin{enumerate}
\item $  \, W_{-1,1}= \{x_1=x_2=x_3, x_4=x_5,  2x_1+x_4=0\}$,
\item $ \, W_{-1,2}=\{x_1=x_4=x_5, x_2=x_3,  2x_1+x_2=0\}$,
\item $ \, W_{-1,3}=\{x_2=x_3= x_4=x_5, x_1+2x_2=0\}$.\end{enumerate}

Thus, there are only six special Jordan subspaces to this network. 
\end{ex}

We use this example to point out two important facts about special Jordan subspaces:
\begin{enumerate}
\item The definition of special Jordan subspaces strongly depends on the corresponding eigenspace. Indeed, $G_{-1}$ and $W_{-1,3}$ are 1-dimensional special Jordan subspaces and $P(W_{-1,3})\subsetneq P(G_{-1})$. This fact is only possible because the subspaces belong to distinct eigenspaces.
\item The number of equalities of coordinates satisfied by all special Jordan subspaces with a prescribed dimension is not fixed, in the sense that two special Jordan subspaces with the same dimension may satisfy a different number of equalities of coordinates.
\end{enumerate}

\subsection{Special Jordan subspaces in the eigenspace associated with the valency} Given a regular network, consider the eigenspace $G$ associated with the valency of the network. If $G$ is 1-dimensional then $G$ is the fully synchrony subspace. If $G$ has a higher dimension, there is an infinite number of special Jordan subspaces in $G$. Nevertheless, we prove that the set of the smallest polydiagonals containing these special Jordan subspaces is finite.

\begin{prop} \label{valency} Given a regular network, let $G$ be the eigenspace associated with the valency, with $\dim G>1$, and let $E$ be a direct complement to the fully synchrony subspace $F$ in $G$.\begin{enumerate}
\item For every special Jordan subspace $J\ne F$ in $G$, there is a special subspace $J'$ in $E$ such that $P(J)=P(J')$ and $F\oplus J=F\oplus J'$.
\item  \label{valencyspecial}  A subspace of $E$ is a special Jordan subspace to the network if and only if it is a special subspace in $E$.
\item \label{CorValency} If $S=F\oplus J_1\oplus \cdots \oplus J_k$ is a direct sum of 1-dimensional special Jordan subspaces in $G$ then there are special subspaces $J_1',\cdots, J_k'$ in $E$ such that $S= F\oplus J_1'\oplus \cdots \oplus J_k'$.\end{enumerate}
\end{prop}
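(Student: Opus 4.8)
The plan is to exploit two elementary facts about the valency eigenspace $G$. First, since the valency $v$ is semi-simple, $A$ acts as $v\,I$ on $G$, so every Jordan chain in $G$ has length one; hence every Jordan subspace of $G$ is $1$-dimensional and all three assertions really concern $1$-dimensional subspaces. Second, every polydiagonal contains the fully synchrony subspace $F$ (as $F$ satisfies every equality $x_i=x_j$), so for any subspace $V$ one has $P(V)=P(F+V)$; moreover $F\cap E=\{0\}$ because $E$ is a complement to $F$ in $G$. These two facts are the engine of the whole argument.

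For part (1), I would start from a special Jordan subspace $J\neq F$, which is $1$-dimensional and satisfies $J\cap F=\{0\}$, so that $F\oplus J$ is $2$-dimensional, and I set $J'=(F\oplus J)\cap E$. A dimension count inside $G$ (using $\dim E=\dim G-1$ and $(F\oplus J)+E=G$) gives $\dim J'=1$, and since $F\cap J'\subseteq F\cap E=\{0\}$ one checks $F\oplus J'=F\oplus J$. The equality of smallest polydiagonals is then immediate from the engine fact: $P(J')=P(F\oplus J')=P(F\oplus J)=P(J)$. It remains to see that $J'$ is special in $E$: given any $1$-dimensional $U\subseteq E$ with $P(U)\subseteq P(J')=P(J)$, note that $U$ is a Jordan subspace of $G$, so the defining property of the special Jordan subspace $J$ forces $P(U)=P(J)$ or $U=F$; the latter is impossible since $U\subseteq E$ and $F\cap E=\{0\}$, whence $P(U)=P(J')$, which is exactly specialness in $E$.

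Part (2) is then a short by-product. For the forward implication, a special Jordan subspace contained in $E$ is automatically special in $E$ by the argument just used, the exceptional case $U=F$ never occurring inside $E$. For the converse, given a $1$-dimensional $W$ that is special in $E$ and an arbitrary Jordan subspace $U$ of $G$ with $P(U)\subseteq P(W)$, I would dispose of the case $U=F$ (the allowed exception in Definition \ref{sjs}) and otherwise replace $U$ by $U'=(F\oplus U)\cap E$; as in part (1) this $U'$ lies in $E$, is $1$-dimensional, and satisfies $P(U')=P(U)\subseteq P(W)$, so specialness of $W$ in $E$ yields $P(U')=P(W)$ and hence $P(U)=P(W)$. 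Thus $W$ meets the definition of a special Jordan subspace.

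Finally, part (3) follows by applying part (1) termwise. Each summand $J_i$ differs from $F$ (otherwise the sum with $F$ would fail to be direct), so part (1) produces $1$-dimensional special subspaces $J_i'\subseteq E$ with $F\oplus J_i'=F\oplus J_i$. Since $J_i\subseteq F+J_i'$ and $J_i'\subseteq F+J_i$, one gets $S=F+J_1'+\cdots+J_k'$; as this is a sum of $k+1$ one-dimensional subspaces whose span $S$ has dimension $k+1$, the sum is forced to be direct. I expect the only genuine obstacle to be part (1): pinning down the correspondence $J\mapsto J'$ and verifying \emph{simultaneously} that it preserves $P(\cdot)$, keeps the direct sum with $F$, and lands among the special subspaces of $E$. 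Once the two engine facts $P(V)=P(F+V)$ and $F\cap E=\{0\}$ are in place, parts (2) and (3) reduce to bookkeeping.
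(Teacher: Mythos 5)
Your proof is correct and follows essentially the same route as the paper: part (1) projects $J$ onto $E$ along $F$ (your $J'=(F\oplus J)\cap E$ coincides with the paper's $\mathrm{span}\{u\}$, where $w=f+u$ with $f\in F$, $u\in E$), the key point in both being that every polydiagonal contains $F$, so passing to the complement does not change $P(\cdot)$, and parts (2) and (3) are then deduced from (1) exactly as in the paper. If anything, your verification that $J'$ is special in $E$ and your treatment of arbitrary (not merely special) Jordan subspaces in the converse direction of (2) are slightly more explicit than the paper's.
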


\begin{proof} (1) Let $J$ be a special Jordan subspace to the network in $G$, $J\ne F$. Notice that $J$ is 1-dimensional. If $J$ is a subspace of $E$ then it is clear from Definition \ref{sjs} that $J$ is special in $E$. So, assume that $J$ is not a subspace of $E$. Then, $$J=\mbox{span}\{w\}, \mbox{ for some }w\in G,\,  w\notin E, \,w\notin F.$$ Because $G=F\oplus E$, we have $w=f+u$, for some $f\in F$ and $u\in E\backslash \{ 0\}$. Thus, $$J \subset F\oplus \mbox{span}\{u\}  \; \mbox{ and } \; P(J)=P(\mbox{span}\{u\}).$$ Therefore, $J'=\mbox{span}\{u\}$ is also a special Jordan subspace to the network in $E$ and so, it is a special subspace in $E$. Moreover, $$J \subset F\oplus J'\; \Rightarrow\; F\oplus J\subset F\oplus J'$$ and, because $\dim (F\oplus J) = \dim (F\oplus J')$, we conclude that $F\oplus J = F\oplus J'$.

(2) It is straightforward that every special Jordan subspace is a special subspace in the corresponding eigenspace. Reciprocally, let $W$ be a special subspace in $E$ and $J$ be a special Jordan subspace of $G$ such that $P(J)\subset P(W)$. Due to (1), there is a special subspace $J'$ in $E$ such that $P(J)=P(J')$, and so, $P(J')\subset P(W)$. Therefore, since $W$ is special in $E$, $P(W)=P(J')$ and $W$ is a special Jordan subspace to the network.

(3) If $S=F\oplus J_1\oplus \cdots \oplus J_k$ is a direct sum of 1-dimensional special Jordan subspaces in $G$ then, due to (1), there are special subspaces $J_1',\cdots, J_k'$ in $E$ such that $F\oplus J_i=F\oplus J_i'$,  for all $1\le i\le k$. Thus, $S=F+J_1'+ \cdots + J_k'$. Besides, $\dim (F+J_1' + \cdots + J_k')=\dim S=k+1$ and so, $S= F\oplus J_1'\oplus \cdots \oplus J_k'$.\end{proof}

This result shows that, despite the infinite number of special Jordan subspaces in $G$ when $\dim G>1$, the set of the smallest polydiagonals containing these subspaces is finite and it consists of the fully synchrony subspace $F$ and the smallest polydiagonals containing all special subspaces in an arbitrary direct complement $E$ to $F$ in $G$. Moreover, a direct sum of $F$ with other special Jordan subspaces of $G$ can be reduced to a direct sum of $F$ with special Jordan subspaces of $E$. 

\begin{ex} \normalfont \label{ExValMult} Consider the following two regular networks in Figure \ref{FigValMult}.
\begin{figure}[!htb] \includegraphics[scale=0.37]{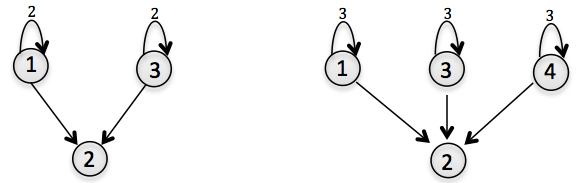}\caption{Regular networks for which the valency is a multiple eigenvalue.}\label{FigValMult}\end{figure}

For the 3-cell network, the valency is an eigenvalue with multiplicity 2 (algebraic and geometric) and the corresponding eigenspace is $$G_2=\{x_1-2x_2+x_3=0\}.$$ As remarked above, all 1-dimensional subspaces in this eigenspace are special Jordan subspaces to the network. According to Proposition \ref{valency}, for all subspaces $J_1$ distinct from the fully synchrony subspace, $P(J_1)$ is the total phase space.

For the 4-cell network, 
the valency is an eigenvalue with multiplicity 3 and the corresponding eigenspace is
$$G_3=\{x_1-3x_2+x_3+x_4=0\}.$$ There is an infinite number of special Jordan subspaces $J_2$ to this network in $G_3$. However, using to Proposition \ref{valency} and choosing the direct complement $$E=\{x_1+x_3+x_4=0, \, x_2=0\},$$ $P(J_2)=P(J')$, where $J'$ is one of the following special subspaces in $E$:
$$ \begin{array}{ll}
\{x_1=x_2=0,\, x_3+x_4=0\}, & \{x_1=x_3,\, x_2=0, \, 2x_1+x_4=0\},\\
\{x_2=x_3=0,\, x_1+x_4=0\}, & \{x_1=x_4,\, x_2=0, \, 2x_1+x_3=0\},\\
\{x_2=x_4=0,\, x_1+x_3=0\}, & \{x_3=x_4, \, x_2=0,\, x_1+2x_3=0\}.\end{array}$$
\end{ex}


\subsection{Special Jordan subspaces in generalized eigenspaces of order greater than 1} Given a regular network, consider a generalized eigenspace $G_{\lambda}$ of order greater than 1. In this subsection we guarantee the existence of some special Jordan subspaces.

The method to obtain all $k$-dimensional special Jordan subspaces in $G_\lambda$ can be done, for example, taking into account that every subspace is contained in a $k$-dimensional special subspace in $\Ker (A-\lambda I)^k$, where $A$ is the adjacency matrix of the network. Furthermore, every subspace must contain a $(k-1)$-dimensional Jordan subspace and thus the calculations can be simplified by demanding the corresponding inclusion between the smallest polydiagonals containing each of these two subspaces.

\begin{ex} \normalfont \label{ExND1}Consider the 5-cell regular network in Figure \ref{FigND1} and the generalized eigenspaces of the corresponding adjacency matrix:
$$\begin{array}{l} G_2= \{x_1=x_2=x_3=x_4=x_5\}, \\
                   G_1= \{x_2=x_5, \; x_3=x_4, \; x_1=0, \; x_2+x_3=0\},  \\ 
                   G_{-1}= \{3x_1+4x_2-x_3+3x_5=0, \; x_2-x_3-x_4+x_5=0\}.
                   \end{array}$$
                   
\begin{figure}[!hb]
\begin{minipage}[b]{0.37\linewidth}\centering
        \includegraphics[scale=0.37]{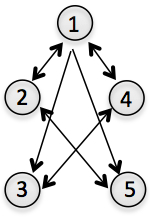} \vspace{-1.1cm}
 \end{minipage}  \begin{minipage}[b]{0.4\linewidth} \centering
$$A= \left(\begin{array}{ccccc}
0 & 1 & 0 & 1 & 0  \\
1 & 0 & 0 & 0 & 1  \\
1 & 0 & 0 & 1 & 0  \\
1 & 0 & 1 & 0 & 0  \\
1 & 1 & 0 & 0 & 0    \end{array}\right)$$
 \end{minipage}
 
 \caption{Regular network and the corresponding adjacency matrix.} \label{FigND1}
 \end{figure}

$G_2$ and $G_1$ are 1-dimensional and, therefore, they are special Jordan subspaces to the network. $G_{-1}$ is a generalized eigenspace of order 2 and
$$K^1=\Ker (A+I)=\{x_2=x_3, \, x_4=x_5, \, x_1+x_2+x_4=0\}.$$ 
All 1-dimensional special Jordan subspaces in $G_{-1}$ are precisely all 1-dimensional special subspaces in $K^1$ and they were already obtained in Example \ref{CalcSS}:
\begin{enumerate}
\item $  \, W_{-1,1}= \{x_1=x_2=x_3, x_4=x_5,  2x_1+x_4=0\}$,
\item $ \, W_{-1,2}=\{x_1=x_4=x_5, x_2=x_3,  2x_1+x_2=0\}$,
\item $ \, W_{-1,3}=\{x_2=x_3= x_4=x_5, x_1+2x_2=0\}$.\end{enumerate}

To obtain all 2-dimensional special Jordan subspaces, \vspace{0.2cm}notice that:

$K^1\cap \Im (A+I)  =\{x_2=x_3, x_4=x_5, x_1+x_2+x_4=0\}\cap \{x_2=x_5,x_3=x_4\}$\\
\hspace*{3cm} $=W_{-1,3}.\vspace{0.2cm}$\\
So, all 2-dimensional special Jordan subspaces contain $W_{-1,3}$.
Therefore, if $J_2$ is a special Jordan subspace in $G_{-1}$ then $P(J_2)$ contains $P(W_{-1,3})$. There are exactly four 2-dimensional special subspaces in $G_{-1}$ containing $W_{-1,3}$, namely:
$$\begin{array}{l}K^1= \{x_2=x_3, \, x_4=x_5, \, x_1+x_2+x_4=0\}, \\
			    J^2_{-1,1} \, = \, \{ x_2=x_4, \; x_3=x_5,\; 3x_1+4x_2+2x_3=0\}, \\
                                J^2_{-1,2} \, = \, \{ x_2=x_5, \; 3x_1+7x_2-x_3=0, \; 2x_2-x_3-x_4=0\},\\
                                J^2_{-1,3}\, = \, \{ x_3=x_4, \; 3x_1+4x_2-x_3+3x_5=0, \; x_2-2x_3+x_5=0\}.\end{array}$$

It is easy to see that the last three subspaces are indeed 2-dimensional Jordan subspaces because they are invariant. Besides, no 2-dimensional Jordan subspace satisfies either $x_2=x_3$ or $x_4=x_5$ because $$G_{-1}\cap \{x_2=x_3\}=G_{-1}\cap \{x_4=x_5\}=K^1.$$ Thus, we conclude that there are exactly eight special Jordan subspaces to this network: 
$$G_2, \, G_{1}, \, W_{-1,1}, \, W_{-1,2}, W_{-1,3}, J^2_{-1,1}, \, J^2_{-1,2}, \, J^2_{-1,3}.$$\end{ex}

The next result guarantees the existence of some special Jordan subspaces. 

\begin{teo} \label{quebracabecas} Given a regular network with adjacency matrix $A$, consider an $l$-dimensional generalized eigenspace $G_\lambda$, with $l\ge 1$. For a positive integer $j$, set $$N^j=\Ker (A-\lambda I)\cap \Im (A-\lambda I)^{j-1}.$$ For every $1\le k\le l$ and for every 1-dimensional special subspace in $N^k$,  there is a $k$-dimensional special Jordan subspace containing it.\end{teo}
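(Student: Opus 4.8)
The plan is to reformulate the notion of special Jordan subspace as a minimality condition for polydiagonals and then to realize the required subspace as a Jordan subspace whose bottom line is forced to be $L$. Throughout write $B=A-\lambda I$. The case $k=1$ is immediate: here $N^1=\Ker B$, the $1$-dimensional Jordan subspaces of $G_\lambda$ are exactly the lines in $\Ker B$, and comparing Definition \ref{sjs} with the definition of special subspace shows that a $1$-dimensional special subspace $W$ in $N^1$ is already a special Jordan subspace containing itself (the extra alternative $U=F$ only weakens the required implication). So assume $k\ge 2$. Since the fully synchrony subspace $F$ is $1$-dimensional, no $k$-dimensional Jordan subspace can equal $F$, so the alternative $U=F$ in Definition \ref{sjs} never occurs. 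Hence a $k$-dimensional Jordan subspace $J$ of $G_\lambda$ is special to the network if and only if there is no $k$-dimensional Jordan subspace $U$ of $G_\lambda$ with $P(U)\subsetneq P(J)$; equivalently, $P(J)$ is minimal, for inclusion, among the smallest polydiagonals of the $k$-dimensional Jordan subspaces of $G_\lambda$.

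Fix a $1$-dimensional special subspace $L=\mbox{span}\{v\}$ in $N^k$. Because $v\in\Im B^{k-1}$, I may write $v=B^{k-1}w$ for some $w$, and then the vectors $x_i=B^{k-i}w$ $(1\le i\le k)$ form a Jordan chain with bottom $x_1=v$: each $x_i$ is nonzero since $B^{i-1}x_i=v\ne 0$, and Jordan chains are linearly independent. Thus $\mbox{span}\{x_1,\dots,x_k\}$ is a $k$-dimensional Jordan subspace containing $v$, so such subspaces exist. As there are only finitely many polydiagonals, I may then choose, among all $k$-dimensional Jordan subspaces that contain $v$, one $J$ whose polydiagonal $P(J)$ is minimal for inclusion.

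The argument now rests on the claim that $N^k\cap P(J)=L$. Granting this, let $U$ be any $k$-dimensional Jordan subspace with $P(U)\subseteq P(J)$ and let $u_1$ be its bottom. Then $u_1\in N^k$, and $u_1\in\mbox{span}\{u_1\}\subseteq U\subseteq P(U)\subseteq P(J)$, so $u_1\in N^k\cap P(J)=L=\mbox{span}\{v\}$. Hence $v\in U$, so $U$ is one of the Jordan subspaces containing $v$ with respect to which $P(J)$ was chosen minimal, forcing $P(U)=P(J)$. By the reformulation above, $J$ is special to the network, and it contains $L$, which is exactly the assertion.

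It remains to prove $N^k\cap P(J)=L$, and this is where I expect the real difficulty. The inclusion $L\subseteq N^k\cap P(J)$ is clear from $v\in J\subseteq P(J)$ and $v\in N^k$. For the reverse inclusion I would use that $L$ is special in $N^k$: by Proposition \ref{W=} this means $L=N^k\cap P(L)$, so any $u\in N^k$ with $u\notin L$ must violate some coordinate equality $x_a=x_b$ that holds on $P(L)$, hence on $v=x_1$. Suppose, towards a contradiction, that such a $u$ lies in $N^k\cap P(J)$. Since $u\in P(J)$ violates $x_a=x_b$, this equality is not among the equalities defining $P(J)$, so some chain vector $x_i$ (necessarily with $i\ge 2$, as $x_1=v$ satisfies it) violates it as well. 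The plan is then to exploit the freedom in the preimage $w$: replacing $w$ by $w+z$ with $z\in\Ker B^{k-1}$ leaves the bottom $v$ unchanged while modifying $x_2,\dots,x_k$, and one seeks such a $z$ for which the new chain satisfies $x_a=x_b$ throughout without losing any equality already present in $P(J)$; the resulting $k$-dimensional Jordan subspace would then contain $v$ yet have polydiagonal strictly smaller than $P(J)$, contradicting minimality. Making this adjustment rigorous --- showing that the coordinate coincidences cutting $N^k$ down to $L$ can be propagated up the Jordan chain simultaneously and without side effects --- is the main obstacle, and is precisely the point where the Jordan structure of $A$ interacts with the specialness of $L$ in $N^k$.
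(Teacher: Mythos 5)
Your reduction of specialness to minimality of $P(J)$ among the polydiagonals of $k$-dimensional Jordan subspaces, and your final step (the bottom of any length-$k$ Jordan chain lies in $N^k$, so if $N^k\cap P(J)=L$ then every competitor $U$ with $P(U)\subseteq P(J)$ contains $v$), are both sound and close in spirit to how the paper concludes. But there is a genuine gap exactly where you flag it: the claim $N^k\cap P(J)=L$ is never established, and with your choice of $J$ it is not clearly true. You pick $J$ with $P(J)$ minimal among $k$-dimensional Jordan subspaces containing $v$; since inclusion of polydiagonals is only a partial order, there may be several incomparable minimal elements, and nothing in your argument prevents one of them from satisfying an extra coordinate equality that pulls a second, independent eigenvector of $N^k$ into $P(J)$. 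The perturbation idea ($w\mapsto w+z$ with $z\in\Ker(A-\lambda I)^{k-1}$) is the right kind of freedom to exploit, but you do not show that the coincidences cutting $N^k$ down to $L$ can be imposed on $x_2,\dots,x_k$ simultaneously without destroying equalities already present, and that is the whole content of the theorem.

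The paper closes this gap by a different device. By Proposition \ref{specialvector} there is a polydiagonal $Q$ with $Q\cap N^k=L$, $P(L)\subset Q$ and $\cod Q=\dim N^k-1$, and the Jordan chain is built \emph{inside} $Q$ from the start: at each stage one forms $Q\cap (A-\lambda I)^{-1}(J_i)\cap \Im (A-\lambda I)^{k-i-1}$, and a codimension count (using that the equalities of $Q$ are independent of those defining $N^k$, hence of those defining the larger spaces $N^{k-i}$) shows this intersection still contains a generalized eigenvector of the next order. The resulting $J_k$ satisfies $P(J_k)\subseteq Q$, hence $P(J_k)\cap N^k\subseteq Q\cap N^k=L$, which is precisely your missing claim; the conclusion then follows as in your last displayed argument. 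If you want to salvage your version, replace ``minimal among Jordan subspaces containing $v$'' by ``minimal among Jordan subspaces contained in $Q$ and containing $v$'' and prove that this family is nonempty --- which is where the dimension count is unavoidable.
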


\begin{proof} The result is trivial when $l=1$ and thus we assume $l>1$. Consider a positive integer $k\le l$ and a $1$-dimensional special subspace $J_1$ in $N^k$. By Proposition \ref{specialvector}, consider a polydiagonal $Q$ satisfying:
$$J_1=Q\cap N^k, \; P(J_1)\subset Q, \; \cod Q=\dim N^k-1.$$

Consider the subspace
$$J_{2}'=Q\cap (A-\lambda I)^{-1}(J_1)\cap \Im (A-\lambda I)^{k-2}.$$ All equalities defining $Q$ are independent from all equalities defining $N^k$ and so, they are also independent from all equalities defining any subspace containing $N^k$. Hence:

$\begin{array}{ll} (1)\;  \cod J_{2}' & =\cod Q+\cod [(A-\lambda I)^{-1}(J_1)\cap \Im (A-\lambda I)^{k-2}]\\  & =\dim N^k-1+n-(\dim N^{k-1}+1)\\ & =n-(\dim N^{k-1}-\dim N^k +2),\end{array}$

$\begin{array}{ll} (2) \; \cod (J_{2}'\cap K^1)&=\cod (J_{2}'\cap N^{k-1})\\  & =n-(\dim N^{k-1}-\dim N^k+1).\end{array}$

and thus,\begin{enumerate} \item $\dim J_{2}'=\dim N^{k-1}-\dim N^k +2$, \item $\dim (J_{2}'\cap K^1)=\dim N^{k-1}-\dim N^k +1$. \end{enumerate}Consequently, there is a system $B_1$ of linearly independent vectors of $N^{k-1}$ such that $N^{k-1}=\mbox{span}(B_1)\oplus N^k$ and $J_{2}'=\mbox{span}(B_1) \oplus J_1\oplus \mbox{span}\{x_{2}\}$, with $$x_{2}\in (A-\lambda I)^{-1}(J_1)\cap \Im (A-\lambda I)^{k-2} \cap (K^2\backslash K^1).$$

 So, $J_{2}=J_1\oplus \mbox{span} \{x_{2}\}$ is a Jordan subspace containing $J_1$ and satisfying $P(J_{2})\cap N^k =J_1$. This Jordan subspace $J_{2}$ is contained in $\Im (A-\lambda I)^{k-2}$ and thus we can consider the subspace $$J_{3}'=Q_j\cap (A-\lambda I)^{-1}(J_{2})\cap \Im (A-\lambda I)^{k-3}.$$ Analogously, there is a system $B_2$ of linearly independent vectors of $N^{k-2}$ such that $N^{k-2}=\mbox{span}(B_2)\oplus N^k$ and $J_{3}'=\mbox{span}(B_2)\oplus J_{2}\oplus \mbox{span}\{x_{3}\}$, with $$x_{3}\in (A-\lambda I)^{-1}(J_{2})\cap \Im (A-\lambda I)^{k-3} \cap (K^3\backslash K^2).$$
 
 So, $J_{j3}=J_{2}\oplus \mbox{span}\{x_{3}\}$ is a Jordan subspace containing $J_1$ and satisfying $P(J_{3})\cap N^k=J_1$. Applying the same procedure, we obtain a 4-dimensional Jordan subspace $J_{4}$ containing $J_1$ and satisfying $P(J_{4}) \cap N^k=J_1$. Continuing this process successively, we obtain a $k$-dimensional Jordan subspace $J_{k}$ containing $J_1$ and satisfying $P(J_{k})\cap N^k=J_1$.

It remains to prove that there is a special Jordan subspace containing $J_1$. Let $Y_{k}$ be a $k$-dimensional special Jordan subspace to the network such that $P(Y_{k})\subset P(J_{k})$. Then,
$$Y_{k}\cap N^k \subset P(Y_{k})\cap N^k\subset P(J_{k})\cap N^k =J_1,$$
and so, the 1-dimensional subspace $Y_{k}\cap N^k$ is $J_1$. Therefore, $Y_{k}$ is a special Jordan subspace containing $J_1$.\end{proof}

\subsection{Main theorem on special Jordan subspaces} In this subsection we prove the main result of this work on special Jordan subspaces.

\begin{teo} Given an $n$-cell regular network, there exists a direct sum decomposition of $\C^n$ into special Jordan subspaces to the network.\label{FT}\end{teo}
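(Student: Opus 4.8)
The plan is to reduce everything to the generalized eigenspaces of the adjacency matrix $A$ and to decompose each one separately. Since $\C^n=\bigoplus_\lambda G_\lambda$ is the decomposition of $\C^n$ into the generalized eigenspaces of $A$, and distinct $G_\lambda$ are linearly independent, it suffices to produce a direct sum decomposition of each $G_\lambda$ into special Jordan subspaces; the direct sum of these over all $\lambda$ then decomposes $\C^n$. The eigenspace $G_v$ attached to the valency and the order-$1$ eigenspaces of the other eigenvalues are the easy cases. For $\lambda\ne v$ with $G_\lambda$ of order $1$ we have $F\not\subseteq G_\lambda$ (because $F\subseteq G_v$ and $G_v\cap G_\lambda=\{0\}$), so Proposition \ref{DSDsubspaces} splits $G_\lambda$ into $1$-dimensional special subspaces, which here coincide with special Jordan subspaces by the observations following Definition \ref{sjs}. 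For $\lambda=v$ I would first split off $F$ (itself a special Jordan subspace), pick a complement $E$ of $F$ in $G_v$, decompose $E$ by Proposition \ref{DSDsubspaces}, and use part (\ref{valencyspecial}) of Proposition \ref{valency} to recognize the resulting special subspaces in $E$ as special Jordan subspaces. The one-dimensional degenerate subcases are handled by taking the space itself.

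The substance is the case of a generalized eigenspace $G_\lambda$ of order $p>1$, which forces $\lambda\ne v$ since the valency is semi-simple. Write $N^k=\Ker(A-\lambda I)\cap\Im(A-\lambda I)^{k-1}$, so that $K^1=N^1\supseteq N^2\supseteq\cdots\supseteq N^p\supsetneq N^{p+1}=\{0\}$ and $\dim N^k$ counts the Jordan blocks of size at least $k$; hence $m_k:=\dim N^k-\dim N^{k+1}$ counts those of size exactly $k$ and $\sum_k k\,m_k=\dim G_\lambda$. For each level $k$ with $m_k>0$ I would select $m_k$ one-dimensional special subspaces of $N^k$ whose direct sum is a complement of $N^{k+1}$ in $N^k$, and feed each of them to Theorem \ref{quebracabecas} to obtain a $k$-dimensional special Jordan subspace $Y$ with $Y\cap N^k$ equal to the chosen line. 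Since $Y$ is a length-$k$ chain, its bottom eigenvector $Y\cap K^1$ lies in $\Im(A-\lambda I)^{k-1}\cap K^1=N^k$, so $Y\cap K^1=Y\cap N^k$ is exactly the prescribed line. Collecting all these $Y$ over all levels, the bottoms of the chains of length at least $k$ form, by construction, a basis of $N^k$ for every $k$; by the standard criterion for a family of Jordan chains to be a Jordan basis (see Section~6 of \cite{LT}), the subspaces $Y$ are then linearly independent and their direct sum has dimension $\sum_k k\,m_k=\dim G_\lambda$, so they decompose $G_\lambda$.

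The crux, and the step I expect to be the main obstacle, is the existence of the $m_k$ special lines complementing $N^{k+1}$ inside $N^k$: the filtration $N^\bullet$ is \emph{not} defined by equalities of coordinates, so one cannot simply intersect $N^k$ with a polydiagonal to carve out $N^{k+1}$, and Theorem \ref{quebracabecas} by itself only yields individual special Jordan subspaces, with no control of their mutual position. I would resolve this with a relative version of Proposition \ref{DSDsubspaces}: given $V\subsetneq E$ with $F\not\subseteq E$ and $m=\dim E-\dim V$, there exist $1$-dimensional special subspaces $L_1,\ldots,L_m$ in $E$ with $E=V\oplus L_1\oplus\cdots\oplus L_m$. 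To prove it, I first build a polydiagonal $X$ from $\dim V$ of the coordinate equalities defining $F$ so that $V\cap X=V\cap F=\{0\}$ and $\cod X=\dim V$; then $W:=E\cap X$ satisfies $W\cap V=\{0\}$, and the inequalities $\dim W\ge\dim E-\dim V$ (from $\cod W\le\cod E+\cod X$) and $\dim W\le\dim E-\dim V$ (from $W\oplus V\subseteq E$) force $\dim W=m$, so $W$ complements $V$ in $E$. Because $W\cap F=\{0\}$, I then adjoin $m$ further coordinate equalities cutting $W$ down to $\{0\}$; the resulting codimension-$\dim E$ polydiagonal meets $E$ trivially, which is precisely the hypothesis of Proposition \ref{DSDsubspaces} for $E$. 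Dropping the defining equations one at a time yields a decomposition of $E$ into $\dim E$ one-dimensional special subspaces, and the $m$ of them obtained by dropping the adjoined equations lie in $W$ and sum directly to it.

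It then remains to apply this relative statement with $E=N^k$ and $V=N^{k+1}$, which is legitimate because $F\cap G_\lambda=\{0\}$ gives $F\not\subseteq N^k$. This supplies exactly the $m_k$ lines required, each of them special in $N^k$ and therefore of the kind consumed by Theorem \ref{quebracabecas}; combining the complements over all levels (from $k=p$ downward) reconstitutes each $N^k$ as claimed, and the proof of the order-$p$ case closes. Assembling the decompositions of all $G_\lambda$ produces the desired direct sum decomposition of $\C^n$ into special Jordan subspaces to the network.
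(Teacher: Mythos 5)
Your argument follows the paper's proof essentially step for step: reduce to the generalized eigenspaces, handle the valency eigenspace and the order-one eigenspaces via Propositions \ref{DSDsubspaces} and \ref{valency}, and for order greater than one choose one-dimensional special subspaces inside the filtration $N^k$, lift each to a $k$-dimensional special Jordan subspace by Theorem \ref{quebracabecas}, and conclude linear independence from the Lancaster--Tismenetsky criterion applied to the eigenvectors at the bottoms of the chains. The only divergence is that you isolate and actually prove the \emph{relative} version of Proposition \ref{DSDsubspaces} (extending a decomposition of $N^{k+1}$ by special lines of $N^{k}$ to a decomposition of $N^{k}$), a step the paper's proof asserts with ``we can add'' but does not justify; your justification of it is correct.
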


\begin{proof} $C^n$ is the direct sum of all generalized eigenspaces (\cite{GLR}, p. 47) and thus it suffices to prove that every generalized eigenspace $G_\lambda$ admits a direct sum decomposition into special Jordan subspaces to the network.

If $G_\lambda$ is an eigenspace that is not associated with the valency of the network, the result follows from Proposition \ref{DSDsubspaces}.

If $G_\lambda$ is the eigenspace associated with the valency, the result is obvious when $G_\lambda$ is the fully synchrony subspace $F$. Otherwise, there is a nonzero direct complement $E$ to $F$ in $G_\lambda$ and, by Proposition \ref{DSDsubspaces}, there is a direct sum decomposition of $E$, $E=J_1\oplus \cdots \oplus J_k$, into special subspaces in $E$, which are special Jordan subspaces to the network due to Proposition \ref{valency}. Hence, $G_\lambda=F\oplus J_1\oplus \cdots \oplus J_k$ is a direct sum decomposition of $G_\lambda$ into special Jordan subspaces to the network.

If $G_\lambda$ is a generalized eigenspace of order $\mu >1$, let $$N^i=\Ker (A-\lambda I)\cap \Im (A-\lambda I)^{i-1} \, \mbox{ and }\, \nu_i=\dim N^i,$$ with $1\le i\le  \mu$. 
Firstly, by Proposition \ref{DSDsubspaces}, consider a direct sum decomposition of $N^{\mu}$ into 1-dimensional special subspaces of $N^\mu$. Since $N^\mu\subset N^{\mu-1}$, we can add $(\nu_{\mu-1}-\nu_\mu)$ 1-dimensional special subspaces in $N^{\mu-1}$ to the previous sum and obtain a direct sum decomposition of $N^{\mu-1}$. Continuing this process, we obtain a direct sum decomposition of $N^1$, $N^1=W_1\oplus\cdots\oplus W_{\nu_1}$, that includes, for each $1\le i\le \mu$, exactly $(\nu_{i}-\nu_{i+1})$ 1-dimensional special subspaces of $N^{i}$, considering $\nu_{\mu+1}=0$. Finally, by Theorem \ref{quebracabecas}, for each special subspace $W_j$ in $N^i\backslash N^{i-1}$, $1\le i\le \mu$, we can consider an $i$-dimensional special Jordan subspace $J_j$ containing $W_j$, considering $N^0=\{0\}$. The set of the constructed special Jordan subspaces is linearly independent (\cite{LT}, p. 233) and so, $G_\lambda=J_1\oplus \cdots\oplus J_{\nu_1}$ is a direct sum decomposition of the whole generalized eigenspace $G_\lambda$.
\end{proof}

\section{List of all synchrony subspaces} In this section we relate special Jordan subspaces with synchrony.

\subsection{Main theorem on synchrony} In this subsection we prove the main result of this work on synchrony.

\begin{teo} Given a regular network, a subspace of the total phase space is a synchrony subspace if and only if it is polydiagonal and a direct sum of special Jordan subspaces to the network.\label{FT2}\end{teo}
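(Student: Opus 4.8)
The plan is to prove both implications, the substance lying entirely in the forward direction. For the backward implication, suppose $S$ is polydiagonal and a direct sum of special Jordan subspaces to the network. By Definition \ref{sjs} every special Jordan subspace is in particular a Jordan subspace of the adjacency matrix $A$, so $S$ is polydiagonal and a direct sum of Jordan subspaces, hence a synchrony subspace by Lemma \ref{known}; this requires no further work. For the forward implication, let $S$ be a synchrony subspace. By Lemma \ref{known} it is polydiagonal and $A$-invariant, so it splits along the generalized eigenspaces as $S=\bigoplus_\lambda S_\lambda$ with $S_\lambda=S\cap G_\lambda$, each $S_\lambda$ being $A$-invariant. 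It therefore suffices to decompose each $S_\lambda$ as a direct sum of special Jordan subspaces to the network, all contained in $S_\lambda$.

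The device that makes this possible is the following transfer observation, valid because $S$ is polydiagonal. If $W\subset S_\lambda$ is a Jordan subspace, then $P(W)\subset P(S)=S$; consequently every Jordan subspace $U$ of $G_\lambda$ with $P(U)\subset P(W)$ satisfies $U\subset P(U)\subset P(W)\subset S$, hence $U\subset S\cap G_\lambda=S_\lambda$. Thus in Definition \ref{sjs} the competitors $U$ automatically lie inside $S_\lambda$, which means that $W$ is a special Jordan subspace to the network exactly when it is special with respect to $S_\lambda$ (the analogue of Definition \ref{sjs} obtained by replacing $G_\lambda$ with $S_\lambda$). In other words, for subspaces of $S_\lambda$ the notions \emph{special to the network} and \emph{special relative to $S_\lambda$} coincide. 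The same argument applies to the valency eigenspace: the competitors still lie in $S_\lambda$, and since $F\subset S_\lambda$ the exceptional clause $U=F$ of Definition \ref{sjs} is common to both notions.

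Granting this, I would produce the decomposition of $S_\lambda$ into subspaces special relative to $S_\lambda$ by re-running the argument of Theorem \ref{FT} with $\C^n$, $G_\lambda$ and $A$ replaced by $S_\lambda$ and the restriction $A|_{S_\lambda}$. When $G_\lambda$ is a non-valency eigenspace, $S_\lambda$ does not contain $F$ and Proposition \ref{DSDsubspaces} applied to $S_\lambda$ gives a decomposition into $1$-dimensional subspaces special in $S_\lambda$; when $G_\lambda$ is the valency eigenspace we have $F\subset S_\lambda$, and writing $S_\lambda=F\oplus E'$ and decomposing $E'$ via Proposition \ref{DSDsubspaces} together with Proposition \ref{valency} gives the decomposition; and when $G_\lambda$ has order greater than $1$ I would set $\tilde N^i=\Ker(A-\lambda I)\cap \Im((A-\lambda I)|_{S_\lambda})^{i-1}$, decompose $\Ker(A-\lambda I)\cap S_\lambda$ through the filtration by the $\tilde N^i$ exactly as in the proof of Theorem \ref{FT}, and use the relative form of Theorem \ref{quebracabecas} to enlarge each $1$-dimensional special subspace of $\tilde N^k$ to a $k$-dimensional Jordan subspace of $S_\lambda$ special relative to $S_\lambda$. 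Since Propositions \ref{W=}, \ref{specialvector}, \ref{DSDsubspaces} and Theorem \ref{quebracabecas} are established using only the operator $P$ and linear-algebraic properties of the relevant operator, they remain valid verbatim with $S_\lambda$ as ambient space. By the transfer observation each subspace so produced is special to the network, and summing over $\lambda$ exhibits $S$ as a direct sum of special Jordan subspaces.

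I expect the main obstacle to be precisely the order-greater-than-$1$ case: one must guarantee that the special Jordan subspaces can be chosen inside $S_\lambda$ and still sum to exactly $S_\lambda$, even though the image filtration $\tilde N^i$ of $A|_{S_\lambda}$ need not coincide with $N^i\cap S_\lambda$, so that a subspace special relative to $\tilde N^i$ is not obviously special in the larger $N^i$ of $G_\lambda$. The transfer observation is exactly what dissolves this difficulty: it shows that \emph{special relative to $S_\lambda$} is already the correct network notion, so that it is enough to carry out the construction of Theorem \ref{FT} internally to $S_\lambda$, and the potential discrepancy between $\tilde N^i$ and $N^i\cap S_\lambda$ never has to be confronted directly.
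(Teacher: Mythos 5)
Your proposal is correct in its essential mechanism but takes a genuinely different route from the paper's. The paper proves the forward implication by invoking the quotient network: the synchrony subspace $S$ is naturally identified, via an isomorphism $\phi:\C^m\to S$, with the total phase space of the $m$-cell quotient network $Q$, whose adjacency matrix $A_Q$ satisfies $A-\mu I_n=\phi\circ(A_Q-\mu I_m)\circ\phi^{-1}$; Theorem \ref{FT} is then applied to $Q$ itself, and $\phi$ transports the resulting decomposition into $S$, with speciality of each $\phi(J_i)$ verified by pulling a competitor $\tilde X_i$ back through $\phi$ --- which is legitimate precisely because $\tilde X_i\subset P(\tilde X_i)\subset P(\phi(J_i))\subset S$. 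That containment is exactly your transfer observation, so the two arguments share their key insight; but the quotient device lets the paper apply Theorem \ref{FT} \emph{as stated} to a genuine regular network, whereas you must re-run the internal machinery of Theorem \ref{FT} relative to $S_\lambda=S\cap G_\lambda$ and $A|_{S_\lambda}$. Your route buys independence from the quotient-network theorem of Golubitsky \emph{et al.}, and your reduction of ``special to the network'' to ``special relative to $S_\lambda$'' is sound (the competitors are forced into $S_\lambda$ because $S$ is polydiagonal, and a Jordan subspace of $A$ contained in $S_\lambda$ is the same thing as a Jordan subspace of $A|_{S_\lambda}$). The price is that Proposition \ref{valency} and especially Theorem \ref{quebracabecas} are only asserted, not shown, to hold ``verbatim'' in the relative setting; I agree they do, since their proofs use only the operator $P$ and the Jordan filtration of a single linear map, but this is the one place a careful reader would want the dimension counts redone with the filtration $\tilde N^i$ of the restriction, given that $\Im\bigl((A-\lambda I)|_{S_\lambda}\bigr)^{i-1}$ can be a proper subspace of $\Im(A-\lambda I)^{i-1}\cap S_\lambda$. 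The backward implication is handled identically in both arguments.
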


\begin{proof}  By Lemma \ref{known}, it suffices to prove that every synchrony subspace admits a direct sum decomposition into special Jordan subspaces. Let $G$ be an $n$-cell regular network, $S$ a synchrony subspace, $Q$ the corresponding $m$-cell quotient network and let $A$ and $A_Q$ be the adjacency matrices of $G$ and $Q$, respectively. By Theorem \ref{FT}, there is a direct sum decomposition of $C^m$, $C^m=J_1\oplus \cdots \oplus J_m$, into special Jordan subspaces to $Q$.

Consider the isomorphism $\phi$ from $C^m$ into $S$ defined by the natural identification between these two spaces and notice that for every real $\mu$, $$A-\mu I_n=\phi \circ (A_Q-\mu I_m)\circ \phi^{-1}.$$ 

$S$ can be written as: $$S=\phi(C^m)=\phi(J_1\oplus \cdots \oplus J_s)=\phi(J_1)\oplus \cdots \oplus \phi(J_m).$$ Notice that if $i\ne j$, $\phi(J_i)\cap \phi(J_j)=\{0\}$. Next we prove that the last equality represents a direct sum decomposition of $S$ into special Jordan subspaces. For each $i=1,\cdots, m$:
\begin{enumerate}
\item $\phi (J_i)$ is a Jordan subspace to $G$: assuming that $\{x_1,\cdots,x_k\}$ is a Jordan chain spanning $J_i$ in a generalized eigenspace $G_{\lambda_i}$ then, for $1\le j\le k$, $$\hspace*{1cm}\phi(x_j)\ne 0, \;(A-\lambda_i I_n)(\phi(x_j))=  \phi ((A_Q-\lambda_i I_m)(x_j)) = \phi (x_{j-1}),$$ considering $x_0=0$. So, $\{\phi(x_1),\cdots,\phi(x_k)\}$ is a Jordan chain in $S$ spanning $\phi(J_i)$.

\item $\phi(J_i)$ is a special Jordan subspace to $G$: let $\tilde X_i$ be a special Jordan subspace to $G$ such that $P(\tilde X_i)\subset P(\phi(J_i))$. $\tilde X_i$ is contained in $S$ because $\tilde X_i\subset P(\tilde X_i)\subset P(\phi(J_i))\subset S$ and so, we can consider the pre-image $X_i=\phi^{-1}(\tilde X_i)$. Analogously to what was done in (1), we prove that $X_i$ is a Jordan subspace of $A_Q$. Therefore,
$$P(\tilde X_i)\subset P(\phi(J_i)) \Rightarrow P(X_i)\subset P(J_i) \Rightarrow X_i=J_i \Rightarrow \tilde X_i=\phi(J_i),$$
and thus $\phi (J_i)$ is a special Jordan subspace to $G$.\end{enumerate}
Hence, $S=\phi(J_1)\oplus \cdots \oplus \phi(J_m)$ is a direct decomposition of $S$ into special Jordan subspaces to $G$.\end{proof}

\begin{cor} Given a regular network, there are 2-dimensional synchrony subspaces if and only if there are eigenvectors with exactly two distinct coordinates.\label{bi}\end{cor}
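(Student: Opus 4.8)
The plan is to prove both implications with Lemma \ref{known}, using that the fully synchrony subspace $F=\{x_1=\cdots=x_n\}$ is $1$-dimensional, spanned by the valency eigenvector $(1,\cdots,1)$, and is contained in every synchrony subspace (being the smallest polydiagonal, it sits inside every polydiagonal). I would first record the elementary fact that a $2$-dimensional polydiagonal is precisely the subspace cut out by a partition of the cells into two blocks, so that every vector in it is constant on each block and hence has at most two distinct coordinates.

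For the forward implication, let $S$ be a $2$-dimensional synchrony subspace. Since $F\subset S$ and $\dim F=1$, Theorem \ref{FT2} writes $S$ as a direct sum of special Jordan subspaces, and as $\dim S=2$ there are only two possibilities: either $S$ is a single $2$-dimensional Jordan subspace, or $S$ is a direct sum of two $1$-dimensional ones. The first case would place $S$ inside a generalized eigenspace $G_\lambda$ of order at least $2$; but then $(1,\cdots,1)\in F\subset S\subset G_\lambda$ forces $\lambda$ to be the valency, which is semi-simple and therefore admits no Jordan chain of length $2$, a contradiction. Hence $S$ has a basis of two eigenvectors, at least one of which, say $u$, is not a multiple of $(1,\cdots,1)$ (else $\dim S=1$). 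Lying in the $2$-dimensional polydiagonal $S$, the eigenvector $u$ has at most two distinct coordinates, and being non-constant it has exactly two, which is the eigenvector we seek.

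For the converse, given an eigenvector $v$ with exactly two distinct coordinate values, I would set $S=\mbox{span}\{(1,\cdots,1),v\}$; this is $2$-dimensional because $v$ is non-constant. The two values of $v$ partition the cells into two blocks, and the associated $2$-dimensional polydiagonal contains both spanning vectors of $S$, hence equals $S$, so $S$ is polydiagonal; moreover $S=F\oplus\mbox{span}\{v\}$ is a direct sum of two eigenspaces and so is $A$-invariant, whence Lemma \ref{known} makes $S$ a synchrony subspace. The one delicate step I would take care over is the exclusion, in the forward direction, of a single length-$2$ Jordan chain: this is exactly where the semi-simplicity of the valency is indispensable, and it is the point on which the whole argument hinges.
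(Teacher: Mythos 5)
Your proof is correct and follows the same route as the paper's one-sentence proof: a $2$-dimensional synchrony subspace is the fully synchrony subspace plus the span of an eigenvector, and a $2$-dimensional polydiagonal forces that eigenvector to take exactly two distinct values. You in fact supply the detail the paper leaves implicit --- ruling out a single length-$2$ Jordan chain by the semi-simplicity of the valency --- so your version is, if anything, more complete.
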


\begin{proof} A 2-dimensional synchrony subspaces is precisely a direct sum of the fully synchrony subspace with a 1-dimensional special Jordan subspaces satisfying $n-2$ equalities of coordinates, where $n$ is the number of cells of the network.\end{proof}

\begin{cor} Given an $n$-cell regular network, a polydiagonal defined by $n-l$ equalities of coordinates is a synchrony subspace if and only if these equalities are satisfied by $l$ special Jordan subspaces whose sum is $l$-dimensional.\label{eqcoord}
\end{cor}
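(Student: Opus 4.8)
The plan is to read this off as an immediate dimension-counting consequence of Theorem \ref{FT2}, once "satisfying the equalities" is translated into containment in a polydiagonal. First I would fix notation. The $n-l$ equalities of coordinates (which I take to be independent, so that they genuinely cut out an $l$-dimensional subspace) define a polydiagonal $P$ with $\dim P=n-(n-l)=l$. The one piece of bookkeeping to record is that a subspace $W$ of $\C^n$ \emph{satisfies} all of these equalities precisely when $W\subseteq P$; thus the phrases ``$W$ satisfies the $n-l$ equalities'' and ``$W\subseteq P$'' are interchangeable throughout.

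For the forward direction, I would assume $P$ is a synchrony subspace and apply Theorem \ref{FT2} to write $P=J_1\oplus\cdots\oplus J_t$ as a direct sum of special Jordan subspaces. Each $J_i\subseteq P$, so each $J_i$ satisfies the $n-l$ equalities, and their sum is $P$, of dimension $l$. This exhibits special Jordan subspaces satisfying the equalities whose sum is $l$-dimensional, as required. Since $\sum_i\dim J_i=l$, there are at most $l$ summands, with exactly $l$ precisely when every $J_i$ is one-dimensional; this is the sense in which the phrase ``$l$ special Jordan subspaces'' should be understood.

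For the backward direction I would assume the equalities are satisfied by special Jordan subspaces $J_1,\dots,J_t$ (hence each $J_i\subseteq P$) whose sum $\Sigma=J_1+\cdots+J_t$ has dimension $l$. Each $J_i$, being a Jordan subspace, is $A$-invariant, and a sum of $A$-invariant subspaces is again $A$-invariant, so $\Sigma$ is $A$-invariant. Because $\Sigma\subseteq P$ and $\dim\Sigma=l=\dim P$, we get $\Sigma=P$; hence $P$ is a polydiagonal that is $A$-invariant, and Lemma \ref{known} (equivalently Theorem \ref{FT2}) gives that $P$ is a synchrony subspace.

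There is no deep obstacle here: the substance lies entirely in Theorem \ref{FT2}. The only points needing care are ensuring that $\dim P=l$, i.e. that the $n-l$ equalities are independent, and interpreting ``$l$ special Jordan subspaces'' correctly, since the number of summands need not equal $l$ unless all summands are one-dimensional. Invoking the $A$-invariance of a not-necessarily-direct sum of Jordan subspaces in the backward direction is what lets me conclude without having to extract a direct decomposition by hand, which is the one spot where a naive argument could stumble.
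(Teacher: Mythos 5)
Your proposal is correct and follows essentially the same route as the paper: the forward direction reads the containment of each summand off Theorem \ref{FT2}, and the backward direction observes that the sum is $A$-invariant (as a sum of invariant subspaces), coincides with the polydiagonal by dimension count, and is therefore a synchrony subspace via Lemma \ref{known}. Your added remarks on the independence of the $n-l$ equalities and on the loose count ``$l$ special Jordan subspaces'' (the summands need not number exactly $l$ unless each is one-dimensional, as the paper's own examples with higher-dimensional Jordan summands show) are sensible clarifications of the statement rather than deviations from the argument.
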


\begin{proof} Firstly, assume that $S$ is a synchrony subspace defined by $n-l$ equalities. Using Theorem \ref{FT2}, $S$ is the direct sum of special Jordan subspaces whose sum is $l$-dimensional. Hence, because each of them is contained in $S$, each of them also satisfies the $n-l$ equalities of $S$.

Conversely, assume that there are $l$ special Jordan subspaces satisfying $n-l$ equalities of coordinates and whose sum is $l$-dimensional. Then, their direct sum has codimension $n-l$ and is defined by the common $n-l$ equalities of coordinates. Thus, it is polydiagonal. Besides, it also invariant because it is the sum of invariant subspaces. So, the direct sum is a synchrony subspace.\end{proof}

\subsection{Method to list all synchrony subspaces}

Theorem \ref{FT2} and Corollary \ref{eqcoord} provide a useful method to list all synchrony subspaces of an $n$-cell regular network:
\begin{enumerate}
\item Compute all special Jordan subspaces (all possible dimensions);
\item For each $l\ge 1$, analyze if there are $l$ special Jordan subspaces with $n-l$ common equalities of coordinates and whose sum is $l$-dimensional. Only in the affirmative case this sum is a synchrony subspace.\end{enumerate}

\begin{remark} \normalfont Note that Step 2 can be easily done just by looking at the list of all special Jordan subspaces. \end{remark}

\begin{ex}\normalfont In Example \ref{Ex+-i}, we calculated all special Jordan subspaces of the 5-cell regular network in Figure \ref{Fig+-i}, namely:

$G_2=\{x_1=x_2=x_3=x_4=x_5\}$,
 
$G_{\pm i}= \{x_1=x_4,  2x_1+2x_2+x_3=0,  \pm (2 i) x_1+x_3=0, 3x_1+x_2+x_5=0\}$,
 
$ W_{-1,1}= \{x_1=x_2=x_3, x_4=x_5,  2x_1+x_4=0\}$,

$ W_{-1,2}=\{x_1=x_4=x_5, x_2=x_3,  2x_1+x_2=0\}$,
 
$W_{-1,3}=\{x_2=x_3= x_4=x_5, x_1+2x_2=0\}$.

For each $1\le l\le 4$, we analyze if there are $l$ special Jordan subspaces with $5-l$ common equalities of coordinates  and whose sum is $l$-dimensional, obtaining exactly five nontrivial synchrony subspaces:
\begin{enumerate}
\item $G_2 \oplus W_{-1,1}= \{x_1=x_2=x_3, \, x_4=x_5\}$,
\item $G_2 \oplus W_{-1,2}= \{x_1=x_4=x_5, \, x_2=x_3\}$,
\item $G_2  \oplus W_{-1,3}= \{x_2=x_3=x_4=x_5\}$,
\item $G_2 \oplus W_{-1,1} \oplus W_{-1,2}= \{x_2=x_3, \, x_4=x_5\}$,
\item $G_2 \oplus G_{i}\oplus G_{-i} \oplus W_{-1,2}= \{x_1=x_4\}$.
\end{enumerate}
The corresponding lattice is presented in Figure \ref{Lattice+-i}.
\end{ex}

\begin{figure}[!ht]\centering \includegraphics[scale=0.3]{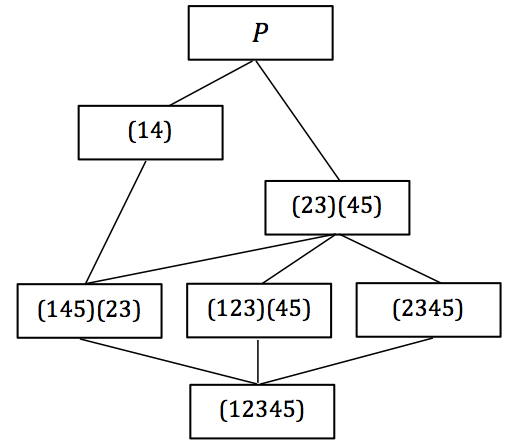} \caption{The lattice of all synchrony subspaces of the network in Figure \ref{Fig+-i}. Each cycle $(i_1,\cdots, i_s)$ denotes the equality of the corresponding cell coordinates and $P$ denotes the total phase space.}\label{Lattice+-i}\end{figure}


\section{Special Jordan subspaces and lattices of synchrony subspaces}

In this section we consider lattices of synchrony subspaces of regular networks and we show that special Jordan subspaces play a special role in the structure of these lattices.

\subsection{The lattice of all synchrony subspaces}

Given a linear transformation $A$ from $\C^n$ into $C^n$, the set of all $A$-invariant subspaces is a lattice, where the meet and the join operations are the intersection and sum of sets, respectively (\cite{GLR}, p. 31). 
Stewart proved that the set of all synchrony subspaces of a given network is a complete lattice and that this lattice is not a sublattice of the lattice of all $A$-invariant subspaces, where $A$ is the adjacency matrix of the network \cite{S09}. In fact, although the meet operation is the same for both lattices, the join is not because the sum of two synchrony subspaces is not always a synchrony subspace. In fact, it is straightforward that the sum of two synchrony subspaces is a synchrony subspace if and only if the sum is polydiagonal.

\begin{ex} \normalfont Using this result, it is very easy to understand that the lattice structure $L_{14}$ on Figure \ref{L14fig} presented by Kamei in \cite{K09}, for 4-cell regular networks whose adjacency matrices have only simple eigenvalues, can be eliminated. This statement answers the query in Remark 6.1 of \cite{K09}.

\begin{figure}[!htb]\centering \includegraphics[scale=0.11]{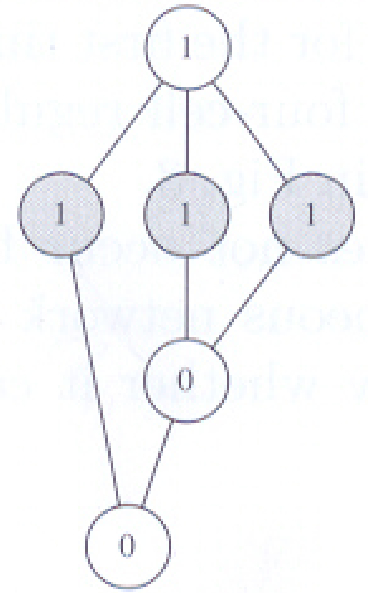}\caption{Lattice structure $L_{14}$ of Kamei \cite{K09}.}\label{L14fig}\end{figure}

In fact, in that work, the $l$-th level of a lattice includes all codimension-$(l-1)$ synchrony subspaces. Hence, as we see in Figure \ref{L14fig}, structure $L_{14}$ is relative to a network with exactly three  codimension-2 synchrony subspaces and so they are exactly three of the following polydiagonal subspaces:
$$\begin{array}{l}S_1=\{x_1=x_2=x_3\}, \\ S_2=\{x_1=x_2=x_4\}, \\S_3=\{x_1=x_3=x_4\}, \\   S_4=\{x_2=x_3=x_4\},\end{array} \begin{array}{l}  T_1=\{x_1=x_2,\, x_3=x_4\}, \\ T_2=\{x_1=x_3,\, x_2=x_4\},\\ T_3=\{x_1=x_4,\, x_2=x_3\}.  \end{array}$$

The figure shows that the sum of two of them is a codimension-1 polydiagonal subspace. Therefore, at least one of them is $S_i$, for some $1\le i\le 4$. But the sum of this synchrony subspace with the third codimension-2 synchrony subspace is  also a codimension-1 polydiagonal, and so, it is a synchrony subspace, fact that contradicts $L_{14}$. Hence, this structure can be eliminated.

\end{ex}

\subsection{Special Jordan subspaces and the lattice structure}

Theorem \ref{FT2} implies that all special Jordan subspaces are sufficient to list all synchrony subspaces and so, to construct the corresponding lattice. Moreover, special Jordan subspaces play a special role in the structure of these lattices because they are directly connected with the join-irreducible elements of such lattices (elements that can not be obtained as the join of two other elements of the corresponding lattice).

\begin{prop} Given a regular network, consider the lattice of all synchrony subspaces. Every join-irreducible element of this lattice is the smallest synchrony subspace containing some special Jordan subspace. Moreover, the number of join-irreducible elements of this lattice does not exceed the number of special Jordan subspaces.
\end{prop}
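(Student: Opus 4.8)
The plan is to use Theorem \ref{FT2} to split an arbitrary synchrony subspace into special Jordan summands and then to collapse a join-irreducible element onto a single summand. First I would fix the two lattice-theoretic facts that drive the argument. In the lattice $\mathcal{L}$ of all synchrony subspaces the meet is intersection, while the join of two elements $A,B$ is the smallest synchrony subspace containing the sum $A+B$ (equivalently, the intersection of all synchrony subspaces containing both). Since $\mathcal{L}$ is a complete lattice, for every subspace $W$ of $\C^n$ there is a well-defined smallest synchrony subspace containing $W$, which I shall denote $\langle W\rangle$; it is obtained as the intersection of all synchrony subspaces containing $W$, and this intersection is again a synchrony subspace because meets are intersections and the total phase space is one such synchrony subspace.

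Next I would take a join-irreducible element $S$ of $\mathcal{L}$ and, by Theorem \ref{FT2}, write $S=J_1\oplus\cdots\oplus J_r$ with each $J_i$ a special Jordan subspace to the network. For each $i$ we have $J_i\subset S$, and since $S$ is itself a synchrony subspace containing $J_i$, minimality gives $\langle J_i\rangle\subset S$. Conversely $S=\sum_i J_i\subset\bigvee_{i=1}^r\langle J_i\rangle$, where $\bigvee$ is the join in $\mathcal{L}$. As $\bigvee_i\langle J_i\rangle$ is the smallest synchrony subspace containing every $\langle J_i\rangle$ and $S$ is a synchrony subspace containing all of them, the reverse inclusion $\bigvee_i\langle J_i\rangle\subset S$ holds as well, so that $S=\langle J_1\rangle\vee\cdots\vee\langle J_r\rangle$. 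A short induction on $r$, using the binary defining property of join-irreducibility ($S=A\vee B\Rightarrow S=A$ or $S=B$), then forces $S=\langle J_i\rangle$ for some index $i$. Hence $S$ is the smallest synchrony subspace containing the special Jordan subspace $J_i$, which is the first assertion. (If one counts the bottom element, the fully synchrony subspace $F$, among the join-irreducibles, the statement still holds, since $F=\langle F\rangle$ and $F$ is itself a special Jordan subspace.)

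For the cardinality bound I would choose, for each join-irreducible element $S$, a special Jordan subspace $W_S$ with $S=\langle W_S\rangle$, as just produced. The resulting assignment $S\mapsto W_S$ is injective, because $S$ is recovered as $\langle W_S\rangle$; therefore the join-irreducible elements are at most as numerous as the special Jordan subspaces. I expect the only delicate point to be the verification that $S=\bigvee_i\langle J_i\rangle$: one must keep clearly in mind that the join is \emph{not} the mere subspace sum but the smallest synchrony subspace containing it, and exploit that each $\langle J_i\rangle$ is trapped between $J_i$ and $S$. Once this is settled, everything else is formal lattice theory resting on the decomposition supplied by Theorem \ref{FT2}.
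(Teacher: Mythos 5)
Your proposal is correct and follows essentially the same route as the paper: decompose the join-irreducible $S$ via Theorem \ref{FT2} into special Jordan summands $J_1,\ldots,J_r$, pass to the smallest synchrony subspaces $\langle J_i\rangle$, observe that $S$ is their join, and invoke join-irreducibility to collapse onto a single $\langle J_i\rangle$. Your extra care in distinguishing the lattice join from the subspace sum, and in checking injectivity of $S\mapsto W_S$ for the counting claim, only makes explicit what the paper leaves implicit.
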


\begin{proof} Let $S$ be a join-irreducible element of the lattice. The result is obvious when $S$ is the fully synchrony subspace and so we suppose that $S$ is another synchrony subspace. By Theorem \ref{FT2}, $S$ can be written as a direct sum of special Jordan subspaces, say $J_1,\cdots,J_k$. Let $S_1,\cdots, S_k$ be the smallest synchrony subspaces containing $J_1,\cdots, J_k$, respectively. Then, $S_1,\cdots,S_k$ are elements of the considered lattice and $S=S_1+\cdots+S_k$. Since $S$ is a join-irreducible element, $S_i=S$, for some $1\le i\le k$. Thus, $S$ is the smallest synchrony subspace containing at least one of the special Jordan subspaces. As an immediate consequence, we get the last statement of this proposition.\end{proof}

\begin{ex}\normalfont  In Examples \ref{Ex+-i} and \ref{ExND1} the number of special Jordan subspaces and join-irreducible subspaces coincides in each case.
\end{ex}

Our study shows that a higher number of special Jordan subspaces implies, in general, a higher number of synchrony subspaces because this fact increases the chances of appearing common equalities of coordinates and thus, increases the possibility of appearing sums of special Jordan subspaces that are polydiagonal. Besides, the higher dimension of an eigenspace implies the higher number of special Jordan subspaces. Therefore, among networks with the same number of cells, it is natural to expect a higher number of synchrony subspaces and of possible lattices structures:
\begin{enumerate}
\item in networks with diagonalizable adjacency matrices,
\item in networks with adjacency matrices having eigenvalues with high geometric multiplicities.
\end{enumerate}

\section{Conclusions}

This paper was mainly motivated by the work of Stewart \cite{S09} about lattices of synchrony subspaces, and by the work of Kamei \cite{K09} about the relation between synchrony subspaces and classes of eigenvectors of the corresponding adjacency matrix. It was also motivated by our observations, in some worked examples, that on the list of all synchrony subspaces written as direct sums of Jordan subspaces, some Jordan subspaces were essential and some others were not.

We then proved the existence of a class of Jordan subspaces whose elements were sufficient to generate all synchrony subspaces by direct sums and defined the elements in this class as {\em special Jordan subspaces}. To be more precise, we showed that all synchrony subspaces can be obtained through a small set of Jordan subspaces, by direct sums. We also emphasize the close relationship between the special Jordan subspaces of a regular network and the corresponding lattice structure of synchrony subspaces.

\appendix

\section*{Appendix}

We consider four examples of regular networks and, for each case, we list all synchrony subspaces and present the corresponding lattice.

\renewcommand{\thesection}{A}

\begin{ex} \normalfont Consider the 3-cell regular network in Figure \ref{FigValMult} and the eigenspaces of the corresponding adjacency matrix:
$$G_2=\{x_1-2x_2+x_3=0\},  \;G_0=\{x_1=x_3=0\}.$$
Due to Proposition \ref{valency}, to calculate all synchrony subspaces we can assume that the list of all special Jordan subspaces in $G_2$ is finite and without loss of generality we consider that the fully synchrony subspace $F$ and $\mbox{span}\{(2,1,0)\}$ are the unique special Jordan subspaces in this eigenspace. The eigenspace $G_0$ is 1-dimensional and thus it is a special Jordan subspace. Therefore, there is only one nontrivial synchrony subspace, namely, $\{x_1=x_3\}=F\oplus G_0$. The corresponding lattice is presented in Figure \ref{FigLatticesValency} - left.
\end{ex}

\begin{ex} \normalfont Consider the 4-cell regular network in Figure \ref{FigValMult} and the eigenspaces of the corresponding adjacency matrix:
$$G_3=\{x_1-3x_2+x_3+x_4=0\} ,\; G_0=\{x_1=x_3=x_4=0\}.$$
Due to Proposition \ref{valency}, we can assume that the list of all special Jordan subspaces in $G_3$ is finite and, using the calculations of Example \ref{ExValMult}, we can assume that the special Jordan subspaces in $G_3$ are the following:
\begin{enumerate}
\item $W_{3,1}=\{x_1=x_2=x_3=x_4\}$,
\item $W_{3,2}=\{x_1=x_2=0,\, x_3+x_4=0\}$,
\item $W_{3,3}=\{x_1=x_3,\, x_2=0, \, 2x_1+x_4=0\}$,
\item $W_{3,4}=\{x_1=x_4,\, x_2=0, \, 2x_1+x_3=0\}$,
\item $W_{3,5}=\{x_2=x_3=0,\, x_1+x_4=0\}$,
\item $W_{3,6}=\{x_2=x_4=0,\, x_1+x_3=0\}$,
\item $W_{3,7}=\{x_3=x_4, \, x_2=0,\, x_1+2x_3=0\}$.
\end{enumerate} The eigenspace $G_0$ is 1-dimensional and thus it is a special Jordan subspace. Therefore, there are eight special Jordan subspaces to the network and we obtain exactly four nontrivial synchrony subspaces:
$$\begin{array}{ll}
 (1)\; W_{3,1}\oplus G_0= \{x_1=x_3=x_4\}, & (3)\; W_{3,1}\oplus W_{3,3}\oplus G_0 =\{x_1=x_3\},\\
(2) \; W_{3,1}\oplus W_{3,4}\oplus G_0 = \{x_1=x_4\}, & (4)\; W_{3,1}\oplus W_{3,7}\oplus G_0=\{x_3=x_4\}.\end{array}$$
The corresponding lattice is presented in Figure \ref{FigLatticesValency} - right.
\end{ex}

\begin{figure}[!ht]\centering \includegraphics[scale=0.35]{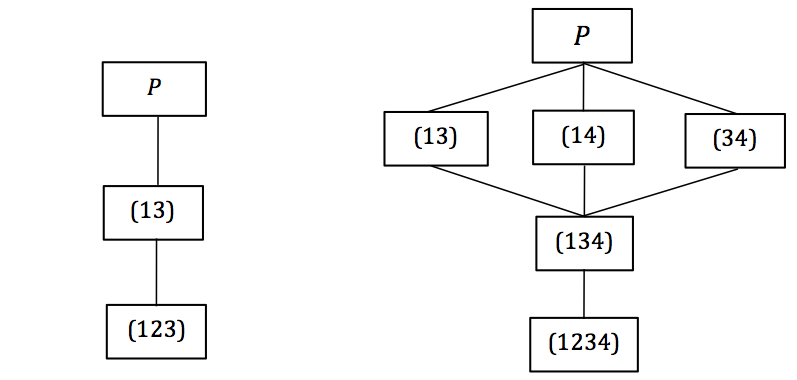} \caption{The lattices of all synchrony subspaces of the networks in Figure \ref{FigValMult}. Each cycle $(i_1,\cdots, i_s)$ denotes the equality of the corresponding cell coordinates and $P$ denotes the total phase space.}\label{FigLatticesValency}\end{figure}

\begin{ex}\normalfont \label{ExNet6} Consider the regular network in Figure \ref{FigNet6} and the eigenspaces of the corresponding adjacency matrix:

\begin{enumerate}\item $G_2= \{x_1=x_2=x_3=x_4=x_5\}$, \item $G_1= \{x_1=x_2=x_4=0, \; x_3=x_5\}$,   \item $G_{-1}= \{x_1+x_3+x_5=0, \; x_1+x_2+x_4=0\}. $\end{enumerate}

\begin{figure}[!ht]
\begin{minipage}[b]{0.37\linewidth}\centering
        \includegraphics[scale=0.4]{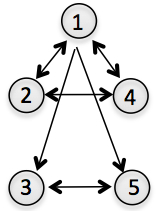} \vspace{-1.3cm}
 \end{minipage}  \begin{minipage}[b]{0.4\linewidth} \centering
$$\left(\begin{array}{ccccc}
0 & 1 & 0 & 1 & 0  \\
1 & 0 & 0 & 1 & 0  \\
1 & 0 & 0 & 0 & 1  \\
1 & 1 & 0 & 0 & 0  \\
1 & 0 & 1 & 0 & 0    \end{array}\right)$$
 \end{minipage}
 \caption{Regular network and the corresponding adjacency matrix of Example \ref{ExNet6}.} \label{FigNet6}
 \end{figure}

There are exactly thirteen special Jordan subspaces to this network:
\begin{enumerate}
\item $G_2 \, =\,\{x_1=x_2=x_3=x_4=x_5 \}$,
\item $G_1 \, =\,\{x_1=x_2=x_4=0, \;x_3=x_5 \}$,
\item $W_{-1,1} \, =\,\{x_1=x_2=x_3, \; x_4=x_5, \; 2x_1+x_4=0\}$,
\item $W_{-1,2} \, = \,\{x_1=x_2=x_4=0, \; x_3+x_5=0\}$,
\item $W_{-1,3} \, =\, \{ x_1=x_2=x_5, \; x_3=x_4, \; 2x_1+x_3=0\}$,
\item $W_{-1,4} \, = \, \{ x_1=x_3=x_4,\; x_2=x_5, \; 2x_1+x_2=0\}$,
\item $W_{-1,5} \, = \, \{ x_1=x_3=x_5=0,\; x_2+x_4=0\}$,
\item $W_{-1,6} \, = \, \{ x_1=x_4= x_5, \; x_2=x_3, \; 2x_1+x_2=0\}$,
\item $W_{-1,7} \, = \,\{x_2=x_3=x_4=x_5, \; x_1+2x_2=0\}$,
\item $W_{-1,8} \, = \, \{ x_1=x_2, \; x_3=x_5, \; x_1+2x_3=0, \; 2x_1+x_4=0\}$,
\item $W_{-1,9} \, = \, \{ x_1=x_3, \; x_2=x_4,\; x_1+2x_2=0, \; 2x_1+x_5=0\}$,
\item $W_{-1,10} \, = \, \{ x_1=x_4, \; x_3=x_5,\; x_1+2x_3=0, \; 2x_1+x_2=0\}$,
\item $W_{-1,11} \, = \, \{ x_1=x_5, \; x_2=x_4,\; x_1+2x_2=0, \; 2x_1+x_3=0\}$.
\end{enumerate}

For each $1\le l\le 4$, we analyze if there are $l$ special Jordan subspaces with $5-l$ common equalities of coordinates and whose sum is $l$-dimensional, obtaining exactly sixteen nontrivial synchrony subspaces:

\begin{enumerate}
\item $W_{2,1} \oplus W_{1,1}= \{x_1=x_2=x_4, \, x_3=x_5\}$,
\item $W_{2,1} \oplus W_{-1,1}= \{x_1=x_2=x_3, \, x_4=x_5\}$,
\item $W_{2,1} \oplus W_{-1,3}= \{x_1=x_2=x_5, \, x_3=x_4\}$,
\item $W_{2,1} \oplus W_{-1,4}= \{x_1=x_3=x_4, \, x_2=x_5\}$,
\item $W_{2,1} \oplus W_{-1,6}= \{x_1=x_4=x_5, \, x_2=x_3\}$,
\item $W_{2,1} \oplus W_{-1,7}= \{x_2=x_3=x_4=x_5\}$,
\item $W_{2,1} \oplus W_{1,1} \oplus W_{-1,2}= \{x_1=x_2=x_4\}$,
\item $W_{2,1} \oplus W_{1,1} \oplus W_{-1,7}= \{x_2=x_4, \, x_3=x_5\}$,
\item $W_{2,1} \oplus W_{1,1} \oplus W_{-1,8}= \{x_1=x_2, \, x_3=x_5\}$,
\item $W_{2,1} \oplus W_{1,1} \oplus W_{-1,10}= \{x_1=x_4, \, x_3=x_5\}$,
\item $W_{2,1} \oplus W_{-1,1} \oplus W_{-1,6}= \{x_2=x_3, \, x_4=x_5\}$,                       
\item $W_{2,1} \oplus W_{-1,3} \oplus W_{-1,4}= \{x_2=x_5, \, x_3=x_4\} $,
\item $W_{2,1} \oplus W_{1,1} \oplus W_{-1,1} \oplus W_{-1,2} = \{x_1=x_2\}$,
\item $W_{2,1} \oplus W_{1,1} \oplus W_{-1,2} \oplus W_{-1,4} = \{x_1=x_4\}$,
\item $W_{2,1} \oplus W_{1,1} \oplus W_{-1,2} \oplus W_{-1,7} = \{x_2=x_4\}$,
\item $W_{2,1} \oplus W_{1,1} \oplus W_{-1,5} \oplus W_{-1,7} = \{x_3=x_5\}$.
\end{enumerate}
The corresponding lattice is presented in Figure \ref{LatticeNet6}. Notice that there are thirteen special Jordan subspaces and there are ten join-irreducible elements in this lattice, namely, all elements of the first and the second level, together with $\{x_1=x_2=x_4\}$, $\{x_1=x_2, x_3=x_5\}$ and $\{x_1=x_4, x_3=x_5\}$.

\begin{figure}[!ht]\centering \includegraphics[scale=0.6]{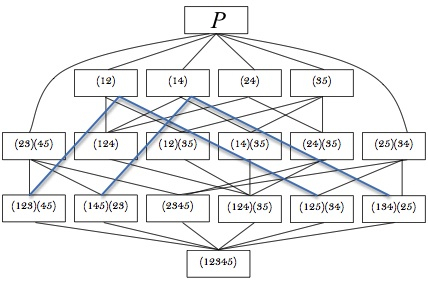} \caption{The lattice of all synchrony subspaces of the network in Figure \ref{FigNet6}. Each cycle $(i_1,\cdots, i_s)$ denotes the equality of the corresponding cell coordinates and $P$ denotes the total phase space.}\label{LatticeNet6}\end{figure}
\end{ex}

\begin{ex}  \normalfont \label{ExMeuEx}Consider the regular network in Figure \ref{FigMeuEx}. The corresponding adjacency matrix has two different eigenvalues, namely, 1 and 0, and the corresponding generalized eigenspaces are:
$$G_1= \{x_1=x_2=x_3=x_4=x_5=x_6\}, \; G_0= \{x_1=0\}.$$

\begin{figure}[!ht]

\begin{minipage}[b]{0.37\linewidth}\centering
        \includegraphics[scale=0.25]{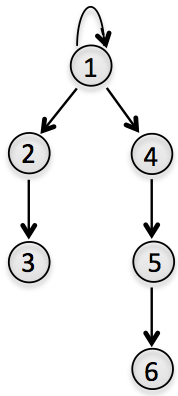} \vspace{-1.3cm}
 \end{minipage}  \begin{minipage}[b]{0.4\linewidth} \centering
$$A= \left(\begin{array}{cccccc}
1 & 0 & 0 & 0 & 0 & 0  \\
1 & 0 & 0 & 0 & 0 & 0  \\
0 & 1 & 0 & 0 & 0 & 0  \\
1 & 0 & 0 & 0 & 0 & 0  \\
0 & 0 & 0 & 1 & 0 & 0  \\
0 & 0 & 0 & 0 & 1 & 0  \\  \end{array}\right)$$
 \end{minipage}
 \caption{Regular network and the corresponding adjacency matrix of Example \ref{ExMeuEx}.} \label{FigMeuEx}
 \end{figure}

$G_1$ is a 1-dimensional eigenspace and thus it is a special Jordan subspace. $G_{0}$ is a generalized eigenspace of order 3 and
$$\begin{array}{l}K^1=\Ker (A)=\{x_1=x_2=x_4=x_5=0\}, \\ K^2=\Ker (A^2)=\{x_1=x_4=0\}.\end{array}$$ 

All 1-dimensional special Jordan subspaces to the network in $G_1$ are precisely all 1-dimensional special subspaces in $K^1$, namely:
\begin{enumerate}
\item $  \, W_{1}= \{x_1=x_2=x_3=x_4=x_5=0\}$,
\item $ \, W_{2}=\{x_1=x_2=x_4=x_5=x_6=0\}$,
\item $ \, W_{3}=\{x_1=x_2= x_4=x_5, x_3=x_6=0\}$.\end{enumerate}

To obtain all 2-dimensional special Jordan subspaces, notice that:
$$K^1\cap \Im (A)  =K^1 \cap \{x_1=x_2=x_4\}=K^1.$$
We calculate all 2-dimensional special Jordan subspaces using Theorem \ref{quebracabecas}. This theorem guarantees the existence of 2-dimensional special Jordan subspaces containing each one of the 1-dimensional special subspaces in $N^2=K^1$. The possible special Jordan subspaces containing $W_1$ are obtained with the calculation of all 2-dimensional special subspaces $J_2$ in the pre-image
$$A^{-1}(W_1)=\{x_1=x_2=x_4=0\},$$ satisfying $P(W_1)\subset P(J_2)$ and $J_2\cap (K^2\backslash K^1)\ne \emptyset$, leading to:
$$\begin{array}{l}
U_1= \{x_1=x_2=x_3=x_4=0\} \; \supset W_1,\\
U_2=\{x_1=x_2=x_4=0, x_3=x_5\} \; \supset W_1.\end{array}$$ Applying the same procedure to the other two pre-images,
$$\begin{array}{l}A^{-1}(W_2)=\{x_1=x_4=x_5=0\}, \\ A^{-1}(W_3)=\{x_1=x_4=0, x_2=x_5\},\end{array}$$
we obtain the following three additional Jordan subspaces:
$$\begin{array}{l}
U_3= \{x_1=x_4=x_5=x_6=0\}\; \supset W_2,\\
U_{4}=\{x_1=x_4=x_5=0, x_2=x_6\}\; \supset W_2,\\
U_{5}=\{x_1=x_4=0,x_2=x_5,x_3=x_6\}\; \supset W_3.\end{array}$$
These subspaces are 2-dimensional special subspaces in $K^2$ and thus, they are special Jordan subspaces to the network.

It is easy to understand that there are no more 2-dimensional special Jordan subspaces. Indeed, if $J$ is a 2-dimensional Jordan subspace, then $P(J)$ contains $P(W_i)$, for some $1\le i\le 3$. If $P(W_1)=\{x_1=x_2=x_3=x_4=x_5\}\subset P(J)$, we must take into account that $K^2=\{x_1=x_4=0\}$ and that $$U_1= \{x_1=x_2=x_3=x_4=0\} , \; U_2=\{x_1=x_2=x_4=0, x_3=x_5\}$$ are special Jordan subspaces satisfying $P(W_1)\subset P(U_j)$, with $1\le j\le 2$. Thus, we just have to prove that if $$P(J)=\{x_1=x_4=x_5\}\, \mbox{ or }\, P(J)=\{x_1=x_4, x_2=x_5\}$$ then $J$ is not a special Jordan subspace. But that trivially follows from the existence of $U_3$ and $U_5$, respectively. A similar situation occurs when $P(W_2)\subset P(J)$ and $P(W_3)\subset P(J)$.

To obtain all 3-dimensional special Jordan subspaces, notice that:
$$ K^2\cap \Im A= \{x_1=x_4=0\}\cap \{x_1=x_2=x_4\}=\{x_1=x_2=x_4=0\}.$$
Therefore, calculating all 3-dimensional special subspaces $J_3$ in the pre-images:
$$A^{-1}(U_1)=\{x_1=x_2=0\}\; \mbox{ and } \; A^{-1}(U_2)=\{x_1=0, x_2=x_4\},$$ satisfying $P(U_1)\subset P(J_3)$ and $P(U_2)\subset P(J_3)$, respectively, and $J_3\cap (G_0\backslash K^2)\ne \emptyset$, we obtain the following three 3-dimensional Jordan subspaces:
$$\begin{array}{l} V_{1}=\{x_1=x_2=x_3=0\},\\
V_2=\{x_1=x_2=0,x_3=x_4\},\\
V_3=\{x_2=x_4,x_3=x_5, x_1=0\}.\end{array}$$

Analogously, it is proved that these are 3-dimensional special Jordan subspaces and that they are the unique subspaces in this condition.

Hence, we obtain the following list of special Jordan subspaces:
\begin{enumerate}
\item $G_1= \{x_1=x_2=x_3=x_4=x_5=x_6\}$,
\item $W_{1}= \{x_1=x_2=x_3=x_4=x_5=0\}$,
\item $W_{2}=\{x_1=x_2=x_4=x_5=x_6=0\}$,
\item $W_{3}=\{x_1=x_2= x_4=x_5, x_3=x_6=0\}$,
\item $U_1= \{x_1=x_2=x_3=x_4=0\}$,
\item $U_4=\{x_1=x_2=x_4=0, x_3=x_5\}$,
\item $U_8= \{x_1=x_4=x_5=x_6=0\}$,
\item $U_{10}=\{x_1=x_4=x_5=0, x_2=x_6\}$,
\item $U_{15}=\{x_1=x_4=0,x_2=x_5,x_3=x_6\}$,
\item $V_{1}=\{x_1=x_2=x_3=0\}$,
\item $V_2=\{x_1=x_2=0,x_3=x_4\}$,
\item $V_3=\{x_2=x_4,x_3=x_5, x_1=0\}$.\end{enumerate}

For each $1\le l\le 5$, we analyze if there are special Jordan subspaces with $6-l$ common equalities of coordinates and whose sum is $l$-dimensional, obtaining exactly 18 nontrivial synchrony subspaces:
\begin{enumerate}
\item $G_1 \oplus W_1= \{x_1=x_2=x_3=x_4=x_5\}$,
\item $G_1 \oplus W_2= \{x_1=x_2=x_4=x_5=x_6\}$,
\item $G_1  \oplus W_3= \{x_1=x_2=x_4=x_5, x_3=x_6\}$,
\item $G_1 \oplus W_1 \oplus W_2= \{x_1=x_2=x_4=x_5\}$,
\item $G_1 \oplus U_1= \{x_1=x_2=x_3=x_4\}$,
\item $G_1 \oplus U_2 = \{x_1=x_2=x_4, x_3=x_5\}$,
\item $G_1\oplus  U_8= \{x_1=x_4=x_5=x_6\}$,
\item $G_1 \oplus U_{10}= \{x_1=x_4=x_5, x_2=x_6\}$,
\item $G_1 \oplus U_{15}= \{x_1=x_4, x_2=x_5,  x_3=x_6\}$,
\item $G_1  \oplus W_{2}\oplus U_1= \{x_1=x_2=x_4\}$,
\item $G_1 \oplus W_2 \oplus U_8= \{x_1=x_4=x_5\}$,
\item $G_1 \oplus W_1\oplus U_{15}= \{x_1=x_4, x_2=x_5\}$,
\item $G_1 \oplus V_1=\{x_1=x_2=x_3\}$,
\item $G_1\oplus V_2= \{x_1=x_2, x_3=x_4\}$,
\item $G_1 \oplus V_3= \{x_2=x_4, \, x_3=x_5\}$,
\item $G_1 \oplus W_{2}\oplus V_1= \{x_1=x_2\}$,
\item $G_1  \oplus U_1\oplus U_8= \{x_1=x_4\}$,
\item $G_1 \oplus W_{2} \oplus V_{3}= \{x_2=x_4\}$.
\end{enumerate}

The lattice of all synchrony subspaces is presented in Figure \ref{FigLatticeMeuEx}. Notice that in this case, the number of special Jordan subspaces and of join-irreducible elements of the lattice coincides.

\end{ex}
\begin{figure}[!ht]\centering \includegraphics[scale=0.3]{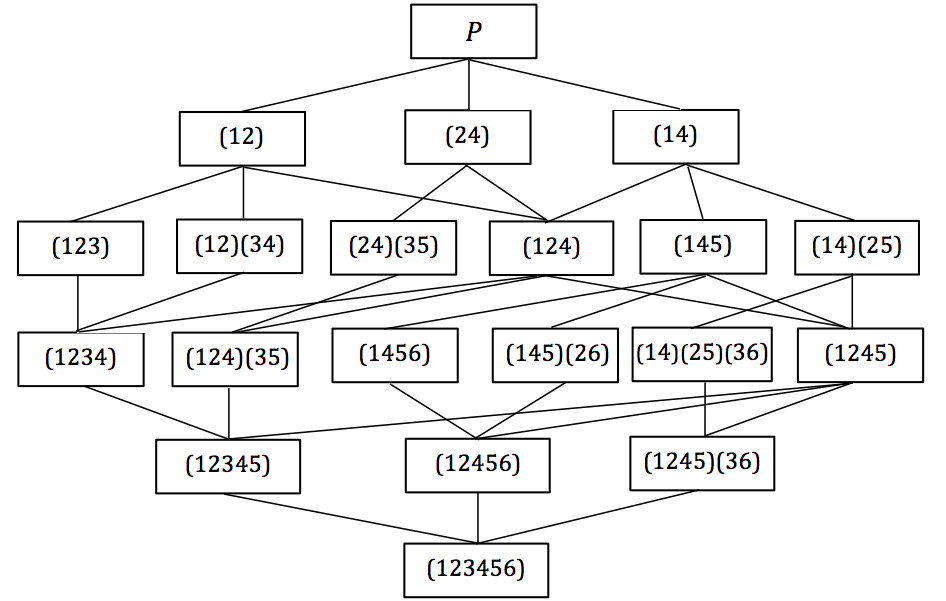} \caption{The lattice of all synchrony subspaces of the network in Figure \ref{FigMeuEx}. Each cycle $(i_1,\cdots, i_s)$ denotes the equality of the corresponding cell coordinates and $P$ denotes the total phase space.}\label{FigLatticeMeuEx}\end{figure}

\end{document}